\theoremstyle{plain}
\newtheorem*{prop}{Proposition}
\newtheorem{Thm}{Theorem}
\newtheorem*{thm}{Theorem}
\newtheorem*{lem}{Lemma}
\theoremstyle{definition}
\newtheorem*{defn}{Definition}
\theoremstyle{remark}
\newcommand{\ind}{\mathrm{ind}\,}
\newcommand{\op}{\mathrm{op}}
\newcommand{\soc}{\mathrm{soc}}
\renewcommand{\hom}{\mathrm{Hom}}
\renewcommand{\mod}[1]{\mathrm{mod}\,#1}
\newcommand{\rad}{\mathrm{rad}}
\newcommand{\End}{\mathrm{End}}
\newcommand{\irr}{\mathrm{irr}}
\newcommand{\AR}[1]{\Gamma(\mod A)}
\newcommand{\id}{\mathrm{Id}}
\def\k{\mathbbm k}
\title[Covering techniques in Auslander-Reiten theory]%
{Covering techniques in Auslander-Reiten theory}
\author[C. Chaio]{Claudia Chaio}
\address[Claudia Chaio]{Centro Marplatense de Investigaciones Matemáticas, FCEyN, 
Universidad Nacional de Mar del Plata, CONICET. Funes 3350, 7600 Mar
del Plata, Argentina}
\email{claudia.chaio@gmail.com}
\thanks{The first and third named author thankfully acknowledge
financial
  support from CONICET and Universidad Nacional de Mar del Plata,
  Argentina. The first and third named authors are CONICET researchers}
\author[P. Le Meur]{Patrick Le Meur}
\address[Patrick Le Meur]{
Laboratoire de Math\'ematiques, Universit\'e Blaise Pascal \&
  CNRS, Complexe Scientifique Les Cézeaux, BP 80026, 63171 Aubi\`ere
  cedex, France}
\curraddr{Universit\'e Paris Diderot, Sorbonne Universit\'e, CNRS, Institut de
   Math\'ematiques de Jussieu-Paris Rive Gauche, IMJ-PRG, F-75013, Paris, France}
 \email{patrick.le-meur@imj-prg.fr}
\author[S. Trepode]{Sonia Trepode}
\address[Sonia Trepode]{Centro Marplatense de Investigaciones Matemáticas, FCEyN, 
Universidad Nacional de Mar del Plata, CONICET. Funes 3350, 7600 Mar
del Plata, Argentina}
\email{strepode@gmail.com}
\date{\today}
\begin{document}

\begin{abstract}
  Given a finite dimensional algebra over a perfect field the text
  introduces covering functors over the mesh category of any modulated
  Auslander-Reiten component of the algebra. This is applied to study
  the composition of irreducible morphisms between indecomposable
  modules in relation with the powers of the radical of the module
  category.
\end{abstract}

\maketitle

\section*{Introduction}

Let $A$ be a finite-dimensional algebra over a field $\k$. The
representation theory of $A$ deals with the category $\mathrm{mod}\,A$ of
finitely generated (right) $A$-modules. In particular it aims at
describing the indecomposable modules up to isomorphism and the
morphisms between them. For this purpose the Auslander-Reiten theory
gives useful tools such as irreducible morphisms and almost split
sequences. These two particular concepts have been applied to study
singularities of algebraic varieties and Cohen-Macaulay modules over
commutative rings.

Let $\mathrm{ind}\,A$ be the full subcategory of $\mathrm{mod}\,A$
containing one representative of each isomorphism class of
indecomposable $A$-modules. Given $X,Y\in \mathrm{ind}\,A$, a morphism
$f\colon X\to Y$ is called \emph{irreducible} if it lies in
$\rad\backslash\rad^2$. Here $\rad$ denotes the radical
of the module category, that is, the ideal in $\mathrm{mod}\,A$ generated
by the non-isomorphisms between indecomposable modules. The powers
$\rad^{\ell}$ of the radical are recursively defined by
$\rad^{\ell+1}=\rad^{\ell}\cdot \rad=\rad\cdot
\rad^{\ell}$.
The Auslander-Reiten theory encodes part of the information of
$\mathrm{mod}\,A$ in the Auslander-Reiten quiver $\Gamma(\mathrm{mod}\,A)$.
This concentrates much of the combinatorial information on the
irreducible morphisms and almost split sequences.  However it does not
give a complete information on the composition of two (or more)
irreducible morphisms. For example the composition of $n$ irreducible
morphisms obviously lies in $\rad^n$ but it may lie in
$\rad^{n+1}$. It is proved in \cite[Thm. 13.3]{MR748231} that if
these irreducible morphisms form a sectional path then their
composition lies in $\rad^n\backslash\rad^{n+1}$.  This
result was made more precise for finite-dimensional algebras over
algebraically closed fields in a study \cite{MR2819689} of the degrees
of irreducible morphisms (in the sense of \cite{MR1157550}) and their
relationship to the representation type of the algebra. The results in
\cite{MR2819689} are based on well-behaved functors introduced first
in \cite{MR576602,MR643558} for (selfinjective) algebras of finite
representation type.  This text presents general constructions of
well-behaved functors with application to composition of irreducible
morphisms.

Let $\Gamma$ be a connected component of the Auslander-Reiten quiver
of $A$ (or, an Auslander-Reiten component, for short). Let
$\mathrm{ind}\,\Gamma$ be the full subcategory of $\mathrm{ind}\,A$
with set of objects the modules $X\in\mathrm{ind}\,A$ lying in
$\Gamma$. Beyond the combinatorial structure on $\Gamma$, the
mesh-category $\k(\Gamma)$ is a first approximation of
$\mathrm{ind}\,\Gamma$ taking into account the composition of
irreducible morphisms. Actually Igusa and Todorov have shown that
$\Gamma$ comes equipped with a $\k$-modulation (\cite{MR748232}),
which is called \emph{standard} here and which includes the division
algebra $\kappa_X=\mathrm{End}_A(X)/\rad(X,X)$ and the
$\kappa_X-\kappa_Y$-bimodule $\mathrm{irr}(X,Y)=\rad(X,Y)/\rad^2(X,Y)$
for every $X,Y\in\Gamma$. The category $\k(\Gamma)$ may be defined by
generators and relations (see Section~\ref{sec:prel} for details). Its
objects are the modules $X\in\Gamma$, the generators are the classes
of morphisms $u\in \kappa_X$ (as morphisms in $\k(\Gamma)(X,X)$) and
$u\in\mathrm{irr}(X,Y)$ (as morphisms in $\k(\Gamma)(X,Y)$), for every
$X,Y\in\Gamma$, and the ideal of relations is the mesh ideal.

When $\k$ is a perfect field this text introduces a covering functor
of $\mathrm{ind}\,\Gamma$ in order to get information about the composition of
irreducible morphisms in $\Gamma$.

The Auslander-Reiten component is called \emph{standard} if there exists an
isomorphism of categories $\k(\Gamma)\simeq \mathrm{ind}\,\Gamma$. Not
all Auslander-Reiten components are standard and in many cases there
even exist no functor $\k(\Gamma)\to \mathrm{ind}\,\Gamma$. For
instance if $\Gamma$ has oriented cycles then such a functor is likely
not to exist. This may be bypassed replacing the mesh category
$\k(\Gamma)$ by that of a suitable translation quiver
$\widetilde\Gamma$ with a $\k$-modulation such that there exists a
covering $\pi\colon \widetilde\Gamma\to \Gamma$. It appears that the
composition of irreducible morphisms in $\mathrm{ind}\,\Gamma$ may be
studied using $\k(\widetilde{\Gamma})$ provided that there exists a
so-called well-behaved functor
$\k(\widetilde\Gamma)\to \mathrm{ind}\,\Gamma$.  These functors were
first considered by Riedtmann \cite{MR576602} in her study of the
shapes of the Auslander-Reiten quivers of selfinjective algebras of
finite representation type over algebraically closed fields and next
by Bongartz and Gabriel \cite{MR643558} for algebras of finite
representation type over algebraically closed fields. The present
article considers well-behaved functors for modulated translation
quivers when $\k$ is a perfect field. Let $(\kappa_x,M(x,y))_{x,y}$ be
the $\k$-modulation of $\widetilde\Gamma$ induced by the standard
modulation of $\Gamma$, that is, $\kappa_x=\kappa_{\pi x}$ and
$M(x,y)=\mathrm{irr}(\pi x,\pi y)$ for every
$x,y\in \widetilde\Gamma$. Then a functor
$F\colon \k(\widetilde\Gamma)\to \mathrm{ind}\,\Gamma$ is well-behaved
if it induces isomorphisms $\kappa_x\simeq\kappa_{\pi x}$ and
$M(x,y)\simeq \mathrm{irr}(\pi x,\pi y)$, for every $x,y\in\Gamma$.
The construction of $F$ relies on three fundamental facts.  Firstly,
if one tries to construct such an $F$ then it is quite natural to
proceed by induction. The translation quiver $\widetilde\Gamma$ is
called \emph{with length} if any two paths in $\Gamma$ having the same
source and the same target have the same length. As mentionned above
an inductive construction is likely not to work if $\widetilde\Gamma$
has oriented cycles and actually simple examples show that this
construction fails if $\widetilde\Gamma$ is not with length. Note that
$\widetilde\Gamma$ is with length when $\widetilde\Gamma$ is the
universal cover of \cite{MR643558}. Secondly, if
$x\in\widetilde\Gamma$ then the ring homomorphism
$\kappa_x\hookrightarrow \k(\widetilde\Gamma)(x,x)\xrightarrow F
\mathrm{End}_A(\pi x)$
is a section of the quotient homomorphism
$\mathrm{End}_A(X)\twoheadrightarrow \kappa_x$. In view of the
Wedderburn-Malcev theorem this section is most likely to exist in the
framework of algebras over perfect fields. Finally, given an
irreducible morphism $f\colon X\to Y$ with $X,Y\in\Gamma$ then there
exist $x,y\in\widetilde\Gamma$ and $u\in\k(\widetilde\Gamma)(x,y)$
such that $f-Fu\in\rad^2$. In view of studying the composition of
irreducible morphisms in $\mathrm{ind}\,\Gamma$ one may wish to have
an equality $f=Fu$. This would permit to \emph{lift} the study into
$\k(\widetilde\Gamma)$ where the composition of morphisms is better
understood because of the mesh ideal. Keeping in mind these comments
the main result of this text is the following.

\begin{Thm}
  \label{thm1}
  Let $A$ be a finite-dimensional
  algebra over a perfect field $\k$. Let $\Gamma$ be an Auslander-Reiten component of $A$. Let
  $\pi\colon \widetilde\Gamma\to \Gamma$ be a covering of translation
  quivers where $\widetilde\Gamma$ is with length.
 There exists a well-behaved functor $F\colon \k(\widetilde
    \Gamma)\to \mathrm{ind}\,\Gamma$.
\end{Thm}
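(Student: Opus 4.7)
The plan is to construct $F$ by induction on a $\mathbb{Z}$-grading of $\widetilde\Gamma$, making essential use both of the Wedderburn--Malcev theorem (via the perfection of $\k$) and of almost split sequences in ${\rm ind}\,\Gamma$. Because $\widetilde\Gamma$ is with length, fixing a basepoint in each connected component yields a function $n\colon \widetilde\Gamma_0\to \mathbb{Z}$ such that $n(y)<n(z)$ for every arrow $y\to z$; in particular $\widetilde\Gamma$ is acyclic, and $n(\tau z)<n(z)$ for every non-projective $z$ because of the length-$2$ path $\tau z\to y_i\to z$ in the mesh at $z$. The induction will process each vertex $z$ by defining $F(z)$, the restriction $F|_{\kappa_z}$, and the restrictions $F|_{M(x,z)}$ for all arrows $x\to z$.

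Two preliminary observations rely on the perfection of $\k$. First, since each ${\rm End}_A(X)$ is a finite-dimensional $\k$-algebra with semisimple quotient $\kappa_X$, the Wedderburn--Malcev theorem provides a $\k$-algebra section $\sigma_X\colon \kappa_X\hookrightarrow {\rm End}_A(X)$ of the canonical surjection. Second, each division algebra $\kappa_X$ is separable over $\k$, hence so is $\kappa_X\otimes_\k\kappa_Y^{\mathrm{op}}$, and therefore the short exact sequence of $\kappa_X$-$\kappa_Y$-bimodules
\[
0\to {\rm rad}^2_A(X,Y)\to {\rm rad}_A(X,Y)\to {\rm irr}(X,Y)\to 0
\]
(with bimodule structures induced by $\sigma_X,\sigma_Y$) splits, so that bimodule sections $M(x,y)\to {\rm rad}_A(\pi x,\pi y)$ always exist. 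Given such a section for every pair $(x,y)$, together with $F(x)=\pi x$ and $F|_{\kappa_x}=\sigma_{\pi x}$, one obtains a $\k$-linear functor $\k(\widetilde\Gamma)\to {\rm ind}\,\Gamma$ as soon as the mesh relations are respected.

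I would carry out the induction on $n(z)$ as follows. If $z$ is projective or $\tau z$ is undefined, then no mesh relation is imposed at $z$ and any choice of bimodule section $F|_{M(x,z)}$ is admissible. If $z$ is non-projective, then $\tau z$ and all immediate predecessors $y_i$ of $z$ (which are the immediate successors of $\tau z$) satisfy $n<n(z)$, so by the inductive hypothesis each $F|_{M(\tau z,y_i)}$ is already defined. Assembling these yields a morphism $\phi'\colon F(\tau z)\to \bigoplus_i F(y_i)^{d_i}$ whose components represent a basis of each bimodule ${\rm irr}(F(\tau z),F(y_i))$. Comparing with the leftmost morphism $\phi_0$ of the almost split sequence starting at $F(\tau z)$, one has $\phi'=g\phi_0$ for some endomorphism $g$ of $\bigoplus_i F(y_i)^{d_i}$ whose image modulo ${\rm rad}$ is a change of basis, hence $g$ is an automorphism; therefore $\phi'$ is itself left minimal almost split. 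Setting $\psi:=\psi_0 g^{-1}$, where $\psi_0$ is the rightmost morphism of the almost split sequence, yields a morphism $\psi\colon \bigoplus_i F(y_i)^{d_i}\to F(z)$ with $\psi\phi'=0$ whose components still represent a basis of each ${\rm irr}(F(y_i),F(z))$. I would then define $F|_{M(y_i,z)}$ as the bimodule sections encoded by these components; the mesh relation at $z$ is exactly $\psi\phi'=0$ and is satisfied by construction.

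The main obstacle in this scheme is to verify that $\phi'$ is left minimal almost split (or equivalently, that the above factorization exists) with the given middle term $\bigoplus F(y_i)^{d_i}$ having multiplicities dictated precisely by the modulation $M(\tau z,y_i)$. This relies on the duality inherent to the translation quiver structure, which ensures that the multiplicities appearing in the mesh at $z$ coincide with those in the almost split sequence ending at $F(z)$. A few compatibility checks remain: that the components of $\psi$ genuinely assemble into bimodule sections of $M(y_i,z)\to {\rm rad}_A(F(y_i),F(z))$ compatible with the chosen $\sigma$'s, which follows from the $A$-linearity of $\psi$ and the intrinsic bimodule structure on $\bigoplus F(y_i)^{d_i}$; and that the inductive definition does not conflict across distinct meshes, which is automatic since each arrow $x\to z$ is treated at the unique inductive step corresponding to its target $z$. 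Well-behavedness of $F$ is then immediate from the construction: $F$ restricts to $\sigma_{\pi x}$ on $\kappa_x$, hence to an isomorphism $\kappa_x\simeq \kappa_{\pi x}$, and to bimodule sections on each $M(x,y)$, hence induces the identification $M(x,y)\simeq {\rm irr}(\pi x,\pi y)$ after passing modulo ${\rm rad}^2$.
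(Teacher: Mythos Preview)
Your overall strategy---induction along a length function, using the cokernel of an already-constructed left minimal almost split morphism to define $F$ on the arrows entering the next vertex---is the right idea and is essentially the route the paper takes. But there is a genuine gap where you write that ``the components of $\psi$ genuinely assemble into bimodule sections \ldots\ compatible with the chosen $\sigma$'s, which follows from the $A$-linearity of $\psi$.'' This does not follow, and is in general false when $\kappa_{\pi z}\neq\k$.

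Concretely: the components $\psi_{i,j}$ are morphisms in ${\rm rad}(F(y_i),F(z))$ whose classes form a $\kappa_{y_i}$-basis of ${\rm irr}(F(y_i),F(z))$, so there is a unique \emph{left} $\k_{y_i}$-linear map sending $\overline{\psi_{i,j}}\mapsto\psi_{i,j}$. For this to be a \emph{bimodule} section the left $\k_{y_i}$-span $V_i:=\bigoplus_j\k_{y_i}\cdot\psi_{i,j}$ must be stable under right multiplication by $\k_z=\sigma_{\pi z}(\kappa_{\pi z})$. Since $\psi$ is the cokernel of the already-determined $\phi'$, you have no freedom to adjust the $\psi_{i,j}$; and if $\sigma_{\pi z}$ was fixed in advance there is no reason for $V_i$ to be $\k_z$-stable. (The $A$-linearity of $\psi$ is irrelevant: it concerns the $A$-action on the modules, not the action of a chosen subalgebra of ${\rm End}_A(\pi z)$.) The paper's key technical lemma (\ref{subsec_localsection}) resolves exactly this: the section $\k_z\subseteq{\rm End}_A(\pi z)$ is \emph{not} fixed in advance but \emph{constructed} at this inductive step---by lifting each endomorphism of $\pi z$ across the almost split sequence, replacing its companion on $\pi(\tau z)$ by the representative already chosen in $\k_{\tau z}$, and pushing forward again---so that each $V_i$ is a $\k_{y_i}$--$\k_z$-sub-bimodule by design. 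Thus fixing a single Wedderburn--Malcev section $\sigma_X$ per $X\in\Gamma$ once and for all cannot work; indeed the paper notes explicitly that distinct $x,x'\in\widetilde\Gamma$ over the same $X$ may require different sections $\k_x,\k_{x'}$, precisely because these are forced step by step by the almost split sequences.
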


The study of the composition of irreducible mophisms in
$\mathrm{ind}\,\Gamma$ using such a covering functor $F$ is made possible
by the following lifting (or, covering) property of $F$ which is the
second main result of the text. No assumption is made on length.
\begin{Thm}
  \label{thm2}
  Let $A$ be a finite-dimensional algebra over a perfect field
  $\k$. Let $\Gamma$ be an Auslander-Reiten component of $A$. Let
$\pi\colon \widetilde\Gamma\to \Gamma$ be a covering of translation
quivers.
  Let $F\colon \k(\widetilde{\Gamma})\to\mathrm{ind}\,\Gamma$ be a
  well-behaved functor, $x,y\in\widetilde\Gamma$
 and let $n\geqslant 0$. 
\begin{enumerate}[(a)]
\item The two following maps induced by $F$ are bijective
\[
   \begin{array}{rclc}
     \bigoplus\limits_{Fz=Fy}\mathfrak{R}^n\k(\widetilde{\Gamma})(x,z)
     /
\mathfrak{R}^{n+1}\k(\widetilde{\Gamma})(x,z)
     & \to &
     \rad^n(Fx,Fy)/\rad^{n+1}(Fx,Fy) \\
\\
     \bigoplus\limits_{Fz=Fy}\mathfrak{R}^n\k(\widetilde{\Gamma})(z,x)/
     \mathfrak{R}^{n+1}\k(\widetilde{\Gamma})(z,x)
     & \to &
     \rad^n(Fy,Fx)/\rad^{n+1}(Fy,Fx) & .
   \end{array}
\]
\item The two following maps induced by $F$ are injective
\[
     \bigoplus\limits_{Fz=Fy}\k(\widetilde{\Gamma})(x,z) \to
     \mathrm{Hom}_A(Fx,Fy)
\ \ and\ \ 
     \bigoplus\limits_{Fz=Fy}\k(\widetilde{\Gamma})(z,x) \to 
    \mathrm{Hom}_A(Fy,Fx) .
\]
\item $\Gamma$ is generalized standard if and only if $F$ is a covering
  functor, that is, the two maps of (b) are bijective (see \cite[3.1]{MR643558}).
\end{enumerate}  
\end{Thm}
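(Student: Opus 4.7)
The plan is to reduce statements (b) and (c) to the graded assertion (a), which I would prove by induction on $n$. In each of (a), (b) and (c) the two maps are formally dual: passing to the opposite translation quiver interchanges them, so I would treat only the source-fixed variant.

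For (a), in the base case $n=0$ both sides vanish unless $Fx=Fy$; the covering property of $\pi$ then forces the sole contributing $z$ to be $x$, reducing the claim to the isomorphism $\kappa_x\xrightarrow{\sim}\kappa_{Fx}$ provided by well-behavedness. For $n=1$ I would identify the left-hand side with $\bigoplus_{Fz=Fy}M(x,z)$, use the well-behaved isomorphisms $M(x,z)\simeq{\rm irr}(\pi x,\pi z)$, and combine them with the bijection between arrows out of $x$ in $\widetilde\Gamma$ and arrows out of $Fx$ in $\Gamma$ furnished by $\pi$ being a covering of translation quivers, obtaining the right-hand side ${\rm irr}(Fx,Fy)$. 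For the inductive step, I would factor each element of $\mathfrak{R}^n\k(\widetilde\Gamma)(x,z)$ as a sum of products $v\circ u$ with $u\in\mathfrak{R}^{n-1}(x,w)$, $v\in\mathfrak{R}(w,z)$ and $w$ running over intermediate vertices, and do the same in ${\rm rad}^n(Fx,Fy)$. The inductive hypothesis at $(x,w)$ and the case $n=1$ at $(w,z)$ should then match the two factorizations term by term via composition. The main obstacle will be showing that any relation among such compositions holding in ${\rm rad}^n/{\rm rad}^{n+1}$ can be lifted, modulo $\mathfrak{R}^{n+1}$, to a relation in $\mathfrak{R}^n/\mathfrak{R}^{n+1}$; this should rely on the fact that the only defining relations in $\k(\widetilde\Gamma)$ are length-two mesh relations, together with the covering property of $\pi$ on meshes.

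For (b), given $(u_z)_z$ in the kernel of the induced map $G\colon\bigoplus_{Fz=Fy}\k(\widetilde\Gamma)(x,z)\to{\rm Hom}_A(Fx,Fy)$, I would prove by induction on $n$ that $(u_z)_z\in\bigoplus_{Fz=Fy}\mathfrak{R}^n\k(\widetilde\Gamma)(x,z)$ for every $n$. Indeed, if this holds at stage $n$, the class of $(u_z)_z$ in the graded quotient maps by (a) injectively to ${\rm rad}^n(Fx,Fy)/{\rm rad}^{n+1}(Fx,Fy)$ and vanishes because $G(u_z)_z=0$. Hausdorffness of the mesh-radical filtration, namely $\bigcap_n\mathfrak{R}^n\k(\widetilde\Gamma)(x,z)=0$ for each pair $(x,z)$, then forces $u_z=0$ for every $z$.

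For (c), assume first that $\Gamma$ is generalized standard. Since each ${\rm Hom}_A(Fx,Fy)$ is finite-dimensional and $\bigcap_n{\rm rad}^n(Fx,Fy)=0$, there exists $N$ with ${\rm rad}^N(Fx,Fy)=0$. Given $f\in{\rm Hom}_A(Fx,Fy)$, iterating the surjectivity in (a) produces elements $u^{(0)},\ldots,u^{(N-1)}$ with $u^{(i)}\in\bigoplus\mathfrak{R}^i\k(\widetilde\Gamma)(x,z)$ and $f-G(u^{(0)}+\cdots+u^{(N-1)})\in{\rm rad}^N=0$; combined with the injectivity of (b), this shows $G$ is bijective. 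Conversely, assume $F$ is a covering functor. The argument of (b) identifies $\bigcap_n{\rm rad}^n(Fx,Fy)$ with the image under $G$ of $\bigoplus_{Fz=Fy}\bigcap_n\mathfrak{R}^n\k(\widetilde\Gamma)(x,z)$, which vanishes by Hausdorffness of the mesh-radical filtration; hence $\bigcap_n{\rm rad}^n(Fx,Fy)=0$ and $\Gamma$ is generalized standard.
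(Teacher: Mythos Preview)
Your deductions of (b) and (c) from (a) are correct and match the paper's brief treatment; the Hausdorffness $\bigcap_n\mathfrak{R}^n\k(\widetilde\Gamma)(x,z)=0$ you invoke holds because the mesh relations are homogeneous of degree two, so $\k(\widetilde\Gamma)$ inherits the path-length grading of $\k\widetilde\Gamma$.

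The gap is in your inductive step for (a). For surjectivity you want to write $f\in{\rm rad}^n(Fx,Fy)$ as a sum $\sum h_jg_j$ with $g_j\in{\rm rad}^{n-1}(Fx,W_j)$, $h_j\in{\rm rad}(W_j,Fy)$, and then lift each factor via the inductive hypothesis and the case $n=1$. But a generic factorisation of this kind gives no control over the intermediate modules $W_j$: they need not lie in $\Gamma$, let alone be of the form $Fw$ for some vertex $w$ adjacent to $x$ in $\widetilde\Gamma$, so neither the inductive hypothesis nor your $n=1$ case applies to them. The paper resolves this by factoring through the left minimal almost split morphism $a=[a_1,\ldots,a_r]\colon Fx\to\bigoplus_iX_i$, with each $a_i$ chosen as $F(\alpha_i)$ for basis elements $\alpha_i\in M(x,x_i)\subset\k(\widetilde\Gamma)(x,x_i)$. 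The factorisation property of $a$ (any element of ${\rm rad}^n(Fx,-)$ lies in $a\cdot{\rm rad}^{n-1}$) then yields $f=\sum_ia_iu_i$ with $u_i\in{\rm rad}^{n-1}(X_i,Fy)$, and now $X_i=Fx_i$ with $x\to x_i$ an arrow in $\widetilde\Gamma$, so induction applies.

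For injectivity you correctly name the obstacle but do not address it; saying the argument ``should rely on'' mesh relations is not yet a proof. The paper's device is again almost split structure. When $x$ is non-injective, complete $a$ to an almost split sequence $0\to Fx\xrightarrow a\bigoplus_i X_i\xrightarrow b\tau_A^{-1}Fx\to 0$ with $b_i=F(\beta_i)$ for the dual basis $(\beta_i)$. After writing each $\phi_z$ as $\sum_i\alpha_i\theta_{i,z}$ and subtracting a lift of the ${\rm rad}^{n+1}$ part (using the already proved surjectivity), one arrives at a relation $\sum_ia_iv_i=0$; exactness forces $v_i=b_iw$ for a common $w$, and the mesh relation $\sum_i\alpha_i\beta_i=0$ in $\k(\widetilde\Gamma)$ then absorbs the resulting $\beta_i$-terms. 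When $x$ is injective, $a$ is an epimorphism, so $\sum_ia_iv_i=0$ gives $v_i=0$ directly. This systematic use of almost split sequences, and their reflection in $\k(\widetilde\Gamma)$ as mesh relations, is the missing idea in your plan.
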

Here $\mathfrak R\k(\widetilde \Gamma)$ is the ideal in
$\k(\widetilde\Gamma)$ generated by the morphisms in $M(x,y)$, for
every arrow $x\to y$ in $\widetilde\Gamma$. Call
it the \emph{radical} of
$\k(\widetilde\Gamma)$ by abuse of terminology. Define  its powers $\mathfrak
R^n\k(\widetilde\Gamma)$ like for the radical of $\mathrm{mod}\,A$.
Here is an interpretation of Theorem~\ref{thm2}.
Both $\k(\widetilde\Gamma)$ and $\mathrm{ind}\,\Gamma$ are filtered by the powers of their respective
radicals. The above theorem asserts that $F$ induces a covering functor $\mathrm{gr}\,\k(\widetilde\Gamma)\to \mathrm{gr}\,\mathrm{ind}\,\Gamma$ (in the
sense of \cite{MR643558}) between
the associated graded categories.

This text is therefore organised as follows. Section~\ref{sec:prel} is
a reminder on Auslander-Reiten theory, modulated translation quivers
and their mesh-categories, and coverings of translation quivers; it
also introduces strongly irreducible morphisms. Section~\ref{sec:wb}
proves the above theorems. Section~\ref{sec:applications1} gives an
application to the composition of irreducible morphisms.

In the sequel $\k$ denotes a perfect field.  Hence the tensor product
over $\k$ of two finite-dimensional division algebras is semi-simple.
Also if $R$ is a finite-dimensional $\k$-algebra and $J\subseteq R$ is
a two-sided ideal such that $R/J$ is a division $\k$-algebra, then the
natural surjection $R\twoheadrightarrow R/J$ admits a section
$R/J\hookrightarrow R$ as a $\k$-algebra. The composition of
mappings $f\colon X \to Y$ and $g \colon Y \to Z$ is denoted by $fg
\colon X \to Z$.

\section{Preliminaries}
\label{sec:prel}

\subsection{Reminder on Auslander-Reiten theory}
\label{sec:remind-ausl-reit}

Following \cite[V.7, p.178]{MR1476671}, denote by $\rad$ the
\emph{radical} of $\mod A$. This is the ideal of $\mod A$ generated by the
non invertible morphisms between indecomposable modules. Its powers
are defined recursively by $\rad^0=\mod A$ and
$\rad^{n+1}=\rad\cdot\rad^n$ ($=\rad^n\cdot \rad$) for every integer
$n\geqslant 0$.  For $X\in \mod A$ indecomposable, the division algebra
$\End_A(X)/\rad(X,X)$  is denoted by
$\kappa_X$ and, given $u\in \End_A(X)$, its residue class in $\kappa_X$ is
denoted by $\overline u$.

A morphism $f$ in $\mod A$ is  \emph{irreducible} (\cite[V.5,
p. 166]{MR1476671}) if it is neither a section nor a retraction and
for all decompositions $f=uv$, then $u$ is a section or else $v$ is a
retraction. Given $X,Y\in \mod A$ indecomposable, a morphism $X\to Y$
is irreducible if and only if it belongs to $\rad$ and not to $\rad^2$,
and the quotient $\rad(X,Y)/\rad^2(X,Y)$ is called the \emph{space of
  irreducible morphisms} from $X$ to $Y$ and denoted by
$\irr(X,Y)$. Given $f\in \rad(X,Y)$, its image in
$\irr(X,Y)$ is denoted by $\overline f$. Hence,
$\irr(X,Y)$ is a $\kappa_X-\kappa_Y$-bimodule, or, a left
$\kappa_X\otimes_\k\kappa_Y^{\op}$-module, such that
$\overline u \cdot \overline f \cdot \overline v = \overline{ufv}$ for
all $u\in \End_A(X)$, $f\in \rad(X,Y)$ and $v\in \End_A(Y)$.

A \emph{minimal left almost split} morphism (\cite[V.1,
137-138]{MR1476671}) is a morphism $f\colon X\to Y$ in $\mod A$ that
is not a section, such that any morphism with domain $X$ and which is
not a section factors through $f$ and such that, for all
$u\in \End_A(X)$, if $uf=f$ then $u$ is invertible. Such a morphism
features the following properties (\cite[V.1, VII.1 Prop. 1.3
p. 230]{MR1476671}): $X$ is indecomposable and, given any direct sum
decomposition $Y = \oplus_{i=1}^rX_i^{n_i}$ where $X_1,\ldots,X_r$ are
indecomposable and pairwise non isomorphic, if $f$ is denoted
coordinatewise as $[f_{i,j}\ ;\
\begin{subarray}{l}
  1\leqslant i\leqslant r \\ 1\leqslant j \leqslant n_i
\end{subarray}]$,
then, for all $i\in \{1,\ldots,r\}$, the family
$\{\overline{f_{i,j}}\}_j$ is a basis of $\irr(X,X_i)$ over
$\kappa_{X_i}$. These properties characterise minimal left almost
split morphisms. Any indecomposable $A$-module $X$ is the domain of such a
morphism $X\to Y$ which happens to be the quotient morphism $X\to X/\soc(X)$
when $X$ is injective and, in general, any irreducible morphism $X\to
Z$ is equal to the composite morphism $X\to Y \to Z$ for some
retraction $Y\to Z$.  Minimal right almost split morphisms are defined
dually and feature dual properties.

An \emph{almost split sequence} (\cite[V.1, p.144]{MR1476671}) is an
exact sequence $0\to X\to Y\to Z\to 0$ in $\mod A$ such that $X\to Y$
is minimal left almost split and $Y\to Z$ is minimal right almost
split. Any indecomposable and non-projective $Z\in \mod A$ (or,
non-injective $X\in \mod A$) is the end term (or, the first term,
respectively) of such a (unique up to isomorphism) sequence
(\cite[V.1, Thm. 1.15 p. 145]{MR1476671}), in such a case, $X$ is
called the \emph{Auslander-Reiten translate} of $Z$ and denoted by
$\tau_AZ$.

The \emph{Auslander-Reiten} quiver of $\mod A$ (\cite[VII.1
p. 225]{MR1476671}) is the pair $(\AR A,\tau_A)$ where $\AR A$ is the
quiver with vertices the objects of $\ind A$ and such that for all
$X,Y\in \ind A$ there is an arrow $X\to Y$ if and only if there exists
an irreducible morphism $X\to Y$. Its connected components are called
\emph{Auslander-Reiten components}.  These quivers are particular
instances of \emph{translation quivers} (\cite[VII.4]{MR1476671}
p. 248). Those are the pairs $(\Gamma,\tau)$ where $\Gamma$ is a
locally finite quiver with two distinguished sets of vertices called
\emph{projectives} and \emph{injectives}, respectively, and $\tau$,
the \emph{translation}, is a bijective mapping from non-projective
vertices to non-injective vertices and such that such that for all
vectices $x,y$ with $x$ non-projective, there is an arrow $y\to x$ if
and only if there is an arrow $\tau x\to y$.

The \emph{valuation} $(a,b)$ of an arrow $X\to Y$ in $\AR A$ is
defined such that $a$ is the maximal integer such that
there exists a minimal right almost split morphism $X^a \oplus Z\to Y$
and $b$ is the maximal integer such that there exists a minimal left
almost split morphism $X\to Y^b \oplus Z$. Equivalently,
$(a,b) = (\dim_{\kappa_X}\irr(X,Y),\dim_{\kappa_Y^\op}\irr(X,Y))$.

\subsection{Strongly irreducible morphisms}
\label{subsec_modules}

The following definition is given over arbitrary fields. However, it
is relevant mostly when $\k$ is a perfect field, see below.
\begin{defn}
  Let $\k$ be any field. Let $A$ be a finite dimensional
  $\k$-algebra. Let $X,X_1,\ldots,X_r\in \ind A$ be such that
  $X_1,\ldots,X_r\in \mathrm{ind}\,A$ are pairwise non isomorphic and
  let $n_1,\ldots,n_r$ be positive integers. A \emph{strongly
    irreducible} morphism $X\to \oplus_{i=1}^rX_i^{n_i}$ is an
  irreducible morphism
  $\left[f_{i,j};\,1\leqslant i\leqslant r,\,1\leqslant j\leqslant
    n_i\right]$
  with the following properties in the left
  $\kappa_X\otimes_\k\kappa_{X_i}^\op$-module $\irr(X,X_i)$, for all
  $i\in \{1,\ldots,r\}$.
  \begin{enumerate}[(a)]
  \item
    $\sum_j\kappa_X \cdot \overline{f_{i,j}}\cdot \kappa_{X_i} =
    \oplus_j\kappa_X \cdot
    \overline{f_{i,j}} \cdot \kappa_{X_i}$.
  \item
    $\sum_j\kappa_X \cdot
    \overline{f_{i,j}} \cdot \kappa_{X_i}$
    is a direct summand of $\irr(X,X_i)$ as a left
    $\kappa_X\otimes_\k\kappa_{X_i}^\op$-module.
  \end{enumerate}
\end{defn}
Note that, $f$ is strongly irreducible if and only if, for all $i$, so
is $X\to X_i^{n_i}$. As mentioned previously, this definition is
mostly relevant when $\k$ is a perfect field because of the following
standard result.
\begin{lem}
  Let $\k$ be a perfect field.
  \begin{enumerate}
  \item Let $E,F$ be finite dimensional division $\k$-algebras. Then,
    $E\otimes_\k F$ is semi-simple.
  \item Let $A$ be a finite dimensional $\k$-algebra. Let $X,Y\in \ind
    A$. Then, $\kappa_X\otimes_\k \kappa_Y^\op$ is semi-simple.
  \end{enumerate}
\end{lem}
\begin{proof}
  ($1$) follows from \cite[10.7 Corollary of
  p. 188]{MR674652} and \cite[10.7 Corollary b
  p. 192]{MR674652}). ($2$) follows from ($1$).
\end{proof}

Here are some comparison elements between strongly irreducible
morphisms and irreducible morphisms, $\k$ here need not be
perfect. Keep the notation used in the previous definition. First,
condition (b) holds automatically when $\k$ is perfect. Next, $f$ is
irreducible if and only if so is each $f_{i,j}$ and the following
assertions are true in the right $\kappa_{X_i}$-vector space
$\irr(X,X_i)$, for all $i\in \{1,\ldots,r\}$.
  \begin{enumerate}
  \item[(a')]
    $\sum_j\overline{f_{i,j}} \cdot \kappa_{X_i} = \oplus_j
    \overline{f_{i,j}} \cdot \kappa_{X_i}$.
  \item[(b')] $\sum_j \overline{f_{i,j}} \cdot \kappa_{X_i}$ is a direct summand of $\irr(X,X_i)$.
  \end{enumerate}
  Note that condition (b') is independent of $f$ because $\kappa_{X_i}$
  is a division algebra.  Clearly, conditions (a) and (b) in the above
  definition are stronger than conditions (a') and (b'). In
  particular, there are irreducible morphisms which are not strongly
  irreducible. Here is a example (see \cite[VII.2,
  p. $235$]{MR1476671}).  Let $\k=\mathbb R$ and
  \[
  A=\left\{\left(
      \begin{smallmatrix}
        a & 0 \\ b & c
      \end{smallmatrix}\right)\ |\ a\in \mathbb C,\,b\in \mathbb
    C,\,c\in \mathbb R\right\}\,.
  \]
  Then, $\AR A$ has the following shape, as a valued quiver,
  \[
  \def\objecstyle{\scriptstyle}
  \def\labelstyle{\scriptscriptstyle}
  \xymatrix@=5pt{
    && |P \ar[rrd]^{(1,2)} \ar@{.}[rrrr] && && S'|
    \\
    |S \ar[rru]^{(2,1)} \ar@{.}[rrrr] && && I| \ar[rru]_{(2,1)}
  }
  \]
  where $S$, $P$, $I$, $S'$ are simple projective, projective non
  simple, injective non simple and simple injective,
  respectively. Consider an almost split sequence
  \[
  0 \to S \xrightarrow{f} P \xrightarrow{g}
  I\to 0\,.
  \]
  Then, there exists an automorphism $u\colon P \to
  P$ fitting in an almost split sequence
  \[
  0 \to P \xrightarrow{[g ,\ ug]} I^2 \to S'\to 0\,.
  \]
  The valuation of the arrow $P\to I$ of $\AR A$ is $(1,2)$. Therefore
  $\dim_{\kappa_P}\irr(P,I)=1$.  Consequently, $\irr(P,I)$ is
  generated by a single element as a left $\kappa_P$-vector space, and
  hence also as a left
  $\kappa_P\otimes_{\mathbb R}\kappa_I^\op$-module. Accordingly, the
  left $\kappa_P\otimes_{\mathbb R}\kappa_I^\op$-module $\irr(P,I)$ is
  simple. Thus, it equals both
  $\kappa_P\cdot \overline g\cdot \kappa_I$ and
  $\kappa_P\cdot\overline{ug}\cdot\kappa_I$.  In particular,
  $g\colon P\to I$ is strongly irreducible; and $[g\ ug]$ is not
  strongly irreducible despite being irreducible, because the sum
  $\kappa_P\cdot \overline g \cdot \kappa_I + \kappa_P \cdot
  \overline{ug}\cdot \kappa_I$ is not direct.

  From now on, assume that $\k$ is perfect. Here are hints on how to
  construct strongly irreducible morphisms and how to obtain
  irreducible morphisms from them.  Consider an arrow $X\to Y$ in
  $\AR A$. Since $\kappa_X\otimes_\k\kappa_Y^\op$ is a semi-simple
  algebra, there is a direct sum decomposition
  $\irr(X,Y) = \oplus_{j=1}^n S_j$, where $S_1,\ldots,S_n$ are simple
  left $\kappa_X\otimes_\k\kappa_Y^\op$-modules. For each $j$, let
  $f_j\in \rad(X,Y)$ be such that $\overline{f_j}$ is a generator of
  the simple module $S_j$. Then, the following morphism is strongly
  irreducible
\begin{equation}
  \label{eq:2}
\left[f_j;\,1\leqslant j \leqslant n\right]\colon X\to Y^n\,.
\end{equation}
Any strongly irreducible
morphism $X\to Y^m$ ($m\geqslant 1$) is equal to the composition of a
morphism such as (\ref{eq:2}), for adequate choices, with a retraction
$Y^n\to Y^m$. Now, for every $j\in\{1,\ldots,n\}$, consider $S_j$ as a
right $\kappa_Y$-vector space; then, using a basis of $\kappa_X$ over
$\k$ it follows that there exists a family $\{u_{j,k}\}_k$ of
automorphisms of $X$ with at most $\dim_\k\kappa_X$ terms and such
that
\[
S_j = \oplus_k \overline{u_{j,k}f_j}\cdot \kappa_Y\,.
\]
Accordingly, there is a direct sum decomposition of right
$\kappa_Y$-vector spaces
\[
\irr(X,Y) =\oplus_{j,k}\overline{u_{j,k}f_j}\cdot \kappa_Y\,.
\]
Hence, this  sum has $\dim_{\kappa_Y}\irr(X,Y)$ terms and the
following morphism is irreducible,
\begin{equation}
  \label{eq:3}
  [u_{j,k}f_j;\,j,k] \colon X \to Y^{\dim_{\kappa_Y}\irr(X,Y)}\,.
\end{equation}
In particular, if the valuation of the arrow $X \to Y$ of $\AR A$ has
the shape $(a,1)$, then all the direct sums in (\ref{eq:2}) and
(\ref{eq:3}) have exactly one term. Actually, these considerations
show the equivalence of the following assertions.
\begin{enumerate}[(i)]
\item The irreducible morphism (\ref{eq:3}) is strongly irreducible.
\item $\irr(X,Y)$ is the direct sum of $\dim_{\kappa_Y} \irr(X,Y)$ simple
  $\kappa_X \otimes_\k \kappa_Y^\op$-modules.
\item For all $j\in \{1,\ldots,n\}$, the $\k$-vector spaces
  $\kappa_X \cdot \overline{f_j} \cdot \kappa_Y$ and
  $\overline{f_j} \cdot \kappa_Y$ are equal.
\item $\kappa_X\cdot \overline{f_j}$ is contained in
  $\overline{f_j}\cdot \kappa_Y$, for all $j\in \{1,\ldots,n\}$.
\end{enumerate}

Going back to the general situation of an irreducible morphism
$f\colon X \to \oplus_{i=1}^rX_i^{n_i}$, it would be worth
characterising when it is strongly irreducible. In view of the previous
considerations, each of the following conditions is sufficient.
\begin{enumerate}
\item $n_1=\cdots = n_r = 1$.
\item $\kappa_X \cdot \overline{f_{i,j}}$ is contained in
  $\overline{f_{i,j}}\cdot \kappa_{X_i}$, for all $i,j$; indeed, in
  such case, the sum
  $\sum_{i,j}\kappa_X \cdot \overline{f_{i,j}}\cdot \kappa_{X_i}$ is
  direct because the morphism $X\to \oplus_iX_i^{n_i}$ is irreducible
  and, for every $i,j$, the $\k$-vector spaces
  $\kappa_X \cdot \overline{f_{i,j}} \cdot \kappa_{X_i}$ and
  $\overline{f_{i,j}} \cdot \kappa_{X_i}$ are equal.
 \item $\kappa_X = \k$; indeed, this assertion implies (2).
\item $\k$ is algebraically closed; indeed, this assertion implies (3).
\item For each $i\in \{1,\ldots,r\}$, the arrow $X\to X_i$ of $\AR A$
  has valuation $(a_i,1)$; indeed, this entails (1).
\end{enumerate}
Recall  (\cite[1.7]{MR1157550}) that if an arrow $X\to Y$ of $\AR A$ has
finite left degree, then it has valuation $(1,b)$ or $(a,1)$. In the
former case, if  $b\geqslant 2$, then
there exists an irreducible morphism $X\to Y^b$ which is necessarily
not strongly irreducible.

\subsection{Factorisation through minimal almost split morphisms}
\label{subsec_factorisation}

The reader is referred to \cite{MR1476671} for basics on Auslander-Reiten
theory. In the sequel the factorisation property of minimal almost
split morphisms is used
as follows.
\begin{lem}
  Let $u\colon X\to Y$ be a left minimal almost split morphism,
  $Z\in \mathrm{mod}\,A$ and $v\in \rad^{n+1}(X,Z)$ for some
  $n\geqslant 0$. There exists $w\in \rad^n(Y,Z)$ such that $v=uw$.
\end{lem}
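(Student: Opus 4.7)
The plan is as follows. Since ${\rm rad}^{n+1} = {\rm rad}\cdot {\rm rad}^n$ (for $n\geqslant 1$; the case $n=0$ reduces immediately to a direct application of the almost split property), I would first write $v = \sum_i a_i b_i$ with $a_i\in {\rm rad}(X,M_i)$ and $b_i\in {\rm rad}^n(M_i,Z)$, for some modules $M_i\in {\rm mod}\,A$. The goal is then to factor each $a_i$ through $u$ and to reassemble the factorisations into the required $w$.

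Recall that a left almost split morphism has an indecomposable source, so $X\in {\rm ind}\,A$. The crucial observation is that any $a_i\in {\rm rad}(X,M_i)$ fails to be a section: any putative retraction $s\colon M_i\to X$ would force $sa_i = {\rm id}_X$ to lie in ${\rm rad}({\rm End}_A(X))$, which is absurd since ${\rm End}_A(X)$ is local. Consequently, the defining property of the left almost split morphism $u$ provides some $c_i\colon Y\to M_i$ with $a_i = uc_i$.

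Setting $w := \sum_i c_i b_i$, one obtains $v = \sum_i u c_i b_i = uw$ at once, and it remains only to check that $w\in {\rm rad}^n(Y,Z)$. This is immediate from the fact that the powers of the radical form two-sided ideals in ${\rm mod}\,A$: each summand $c_i b_i$ lies in ${\rm Hom}_A(Y,M_i)\cdot {\rm rad}^n(M_i,Z)\subseteq {\rm rad}^n(Y,Z)$, and hence so does $w$. No substantial obstacle arises in this argument; notably, neither the left minimality of $u$ nor the hypothesis that $\k$ be perfect is needed.
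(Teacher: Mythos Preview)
Your argument is correct and follows essentially the same route as the paper: decompose $v$ via ${\rm rad}^{n+1}={\rm rad}\cdot{\rm rad}^n$, factor each left factor through $u$ using the almost split property, and reassemble. Your added remarks (the separate treatment of $n=0$, the reason each $a_i$ is not a section, and the observation that left minimality is unused) are accurate elaborations of points the paper leaves implicit.
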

\begin{proof}
  Since $v\in \rad^{n+1}$, there exist an integer $N\geqslant 1$,
  indecomposable modules $X_1,\ldots,X_N$ and morphisms $v_i'\in
  \rad(X,X_i)$ and $v_i''\in \rad^n(X_i,Z)$, for every $i\in
  \{1,\ldots,N\}$, such that $v = \sum_{i=1}^N v_i'v_i''$. For every
  $i\in \{1,\ldots,N\}$, there exists $w_i\in \hom_A(Y,X_i)$ such that
  $v_i' = u w_i$. Let $w = \sum_{i=1}^N w_iv_i''$. Then, $w\in
  \rad^n(Y,Z)$ and $v=uw$.
\end{proof}

\subsection{Modulated translation quivers and their mesh-categories}
\label{subsec_modulation}

Let $\Gamma$ be a translation quiver. For a non-projective vertex $x$,
the subquiver of $\Gamma$ formed by the arrows starting in $\tau x$
and the arrows arriving in $x$ is called the \emph{mesh} starting in
$\tau x$.

A \emph{($\k$)-modulation} on $\Gamma$ is the following data
\begin{enumerate}[(i)]
\item a division $\k$-algebra $\kappa_x$ for every vertex $x\in
  \Gamma$,
\item a non-zero $\kappa_x-\kappa_y$ bimodule $M(x,y)$ for every arrow
  $x\to y$ in $\Gamma$,
\item a $\k$-algebra isomorphism $\tau_*\colon \kappa_x\xrightarrow{\sim}
  \kappa_{\tau x}$ for every vertex $x\in\Gamma$,
\item a non-degenerate $\kappa_y-\kappa_y$-linear map $\sigma_*\colon
  M(y,x)\otimes_{\kappa_x} M(\tau x,y)\to \kappa_y$ 
  (the left $\kappa_x$-module structure on $M(\tau x,y)$ is defined
  using its structure of left $\kappa_{\tau x}$-module and  $\tau_*\colon \kappa_x\to \kappa_{\tau x}$).
\end{enumerate}
With such a structure, $\Gamma$ is called a \emph{modulated}
translation quiver.  If $A$ is a finite-dimensional algebra over a
field $\k$ then the Auslander-Reiten quiver $\Gamma(\mathrm{mod}\,A)$
has a $\k$-modulation as follows (\cite[2.4, 2.5]{MR748232}), in the
rest of the text this modulation is called the \emph{standard}
modulation of $\AR A$; the restriction of the standard modulation of
$\AR A$ to any Auslander-Reiten component is also called the standard
modulation of that component.  For every non-projective
$X\in\mathrm{ind}\,A$ fix an almost split sequence
$0\to \tau_A X\to E\to X\to 0$ in $\mathrm{mod}\,A$.  Then
\begin{itemize}
\item $\kappa_X=\mathrm{End}_A(X)/\rad(X,X)$ for every $X\in \mathrm{ind}\,A$,
\item $M(X,Y)=\mathrm{irr}(X,Y)$
  for every arrow $X\to Y$ in $\Gamma(\mathrm{mod}\,A)$,
\item for every $X\in \mathrm{ind}\,A$ and every morphism $u\colon X\to
  X$ defining the residue class $\overline u\in \kappa_X$, let
  $\tau_*\overline u
  \colon \tau_A X\to \tau_AX$ be the residue class $\overline v$ where
  $v\colon \tau_A X\to \tau_A X$ is a morphism fitting into a
  commutative diagram
\[
    \xymatrix@R=15pt{
0 \ar[r] & \tau_A X \ar[r] \ar[d]_v & E \ar[r] \ar[d]& X \ar[r] \ar[d]^u& 0\\
0 \ar[r] & \tau_A X \ar[r]& E \ar[r]& X \ar[r]& 0\,,
}
\]
\item let $X,Y\in \mathrm{ind}\,A$ with $X$ non-projective and assume
  that there is an arrow $Y\to X$ in $\Gamma(\mathrm{mod}\,A)$. Let $\overline
  u\in M(\tau_A X,Y)$ and $\overline v\in M(Y,X)$ be residue classes
  of morphisms $u\colon \tau_A X\to Y$ and $v\colon Y\to X$,
  respectively. Then define $\sigma_*(\overline v\otimes \overline u)$
  as the composition $\overline{v'u'}$ where $u',v'$ are morphisms
  fitting into a commutative diagram
\[
    \xymatrix@R=15pt{
&&& Y \ar[ld]_{v'} \ar[d]^v\\
0 \ar[r] & \tau_A X \ar[d]_u \ar[r]  & E \ar[r] \ar[ld]^{u'} & X
\ar[r] & 0\\
&Y&&&.
}
\]
\end{itemize}
This construction does not depend on the initial choice of the almost
split sequences up to an isomorphism of modulated translation quivers
(\cite[2.5]{MR748232}). In the sequel $\Gamma(\mathrm{mod}\,A)$ is
considered as a modulated translation quiver as above.

If $\Gamma$ is a modulated translation quiver, its
\emph{mesh-category} $\k(\Gamma)$ is defined as follows
(\cite[1.7]{MR748232}). Let $S$ be the semi-simple category whose
object set is the set of vertices in $\Gamma$ and such that
$S(x,y)=\kappa_x$ if $x=y$ and $S(x,y)=0$ otherwise. The collection
$\{M(x,y)\}_{\text{$x\to y$ in $\Gamma$}}$ naturally defines an
$S-S$-bimodule denoted by $M$. The \emph{path-category} is the tensor
category $T_S(M)$ also denoted by $\k \Gamma$. The \emph{mesh-ideal}
is the ideal in $\k \Gamma$ generated by a collection of morphisms
$\gamma_x\colon \tau x\to x$ indexed by the non-projective vertices
$x\in\Gamma$. Given a non-projective vertex $x\in\Gamma$, a morphism
$\gamma_x\colon \tau x\to x$ in $\k \Gamma$ is defined as follows. For
every arrow $y\to x$ ending in $x$, fix a basis $(u_1,\ldots,u_d)$ of
the $\kappa_y$-vector space $M(y,x)$. Let $(u_1^*,\ldots,u_d^*)$ be
the associated dual basis of the $\kappa_y$-vector space $M(\tau x,y)$
under the pairing $\sigma_*$ (that is, $\sigma_*(u_i\otimes u_j^*)$ is
$1$ if $i=j$ and $0$ otherwise). Then
$\gamma_x=\sum\limits_{\text{\tiny $y\to x$ in
        $\Gamma$}}\sum\limits_iu_i^* u_i\in \k\Gamma(\tau x,x)$.
This morphism does not depend on the choice of the basis $(u_1,\ldots,u_d)$.
The mesh-category  is then defined as the quotient
category of $\k\Gamma$ by the mesh-ideal.

Let $\Gamma$ be a component of $\Gamma(\mathrm{mod}\,A)$ endowed with
its standard modulation. As proved in \cite[Sect. 2]{MR748232}, the
mesh-category $\k(\Gamma)$ does not depend on the choice of the almost
split sequences used to define the modulation up to an isomorphism of
$\k$-linear categories. The following lemma explains how to recover
the mesh-relations $\gamma_X$ and the pairing $\sigma_*$ starting from
a different choice of almost split sequences.

\begin{lem}
  In the previous setting, let $X\in\Gamma$ be non-projective and let
  \[
    0\to \tau_A X\xrightarrow \alpha
    \bigoplus\limits_{i=1}^rX_i^{n_i}\xrightarrow \beta X\to 0
    \]
be the almost split sequence ending in $X$ that is used in the
definition of the modulation on $\Gamma$, where $X_1,\ldots,X_r\in
\Gamma$ are pairwise distinct. Let
\[
    0\to \tau_A X\xrightarrow{f=\left[
f_{i,j}\,;\,
\begin{smallmatrix}
  1\leqslant i\leqslant r\\
1\leqslant j\leqslant n_i
\end{smallmatrix}
\right]
}
    \bigoplus_iX_i^{n_i}\xrightarrow{
g=\left[
g_{i,j}\,;\,
\begin{smallmatrix}
  1\leqslant i\leqslant r\\
1\leqslant j\leqslant n_i
\end{smallmatrix}
\right]^t
} X\to 0
\]
be another almost split sequence where the $f_{i,j}\colon \tau_AX\to
X_i$ and the $g_{i,j}\colon X_i\to X$ are the components of
$f$ and $g$, respectively. Then there is a commutative diagram
\[
  \xymatrix@R=15pt{
0 \ar[r] & \tau_AX\ar[r]^\alpha \ar[d]_u &
\bigoplus_iX_i^{n_i}\ar[r]^\beta \ar[d]^v & X\ar[r] \ar@{=}[d] & 0\\
0 \ar[r] & \tau_AX\ar[r]^f &
\bigoplus_iX_i^{n_i}\ar[r]^g & X\ar[r]  & 0
}
\]
where $u,v$ are isomorphisms. With this data:
\begin{enumerate}[(a)]
\item $(\overline{g_{i,1}},\ldots,\overline{g_{i,n_i}})$ is a basis of
  the left $\kappa_{X_i}$-module $\mathrm{irr}(X_i,X)$ and the corresponding
  dual basis of  the right $\kappa_{X_i}$-module $\mathrm{irr}(\tau_AX,X_i)$ 
  under the pairing $\sigma_*$ is  $(\overline{uf_{i,1}},\ldots,\overline{uf_{i,n_i}})$, for
  every $i\in\{1,\ldots,r\}$,
\item $\gamma_X=\sum\limits_{i,j}\overline u\overline{f_{i,j}}\otimes
  \overline{g_{i,j}}$.
\end{enumerate}
\end{lem}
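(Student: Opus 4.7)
The plan is to exploit the uniqueness of almost split sequences ending at a given module, and then compute the pairing $\sigma_*$ directly from its definition in terms of the fixed sequence $\alpha,\beta$. Since both sequences share the same right-hand term $X$, the standard equivalence argument for almost split sequences produces isomorphisms $u\colon\tau_AX\to\tau_AX$ and $v\colon\bigoplus_iX_i^{n_i}\to\bigoplus_iX_i^{n_i}$ making the displayed diagram commute. The basis assertion of (a), namely that $(\overline{g_{i,1}},\ldots,\overline{g_{i,n_i}})$ is a basis of the left $\kappa_{X_i}$-module ${\rm irr}(X_i,X)$, is the classical consequence of $g$ being a minimal right almost split morphism.

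For the duality assertion in (a), I would compute $\sigma_*(\overline{g_{i,j}}\otimes\overline{uf_{i,k}})$ directly from its definition, which uses the chosen sequence $\alpha,\beta$. Let $\iota_{(i,j)}\colon X_i\to\bigoplus_{i'}X_{i'}^{n_{i'}}$ and $\pi_{(i,k)}\colon\bigoplus_{i'}X_{i'}^{n_{i'}}\to X_i$ denote the canonical inclusion and projection of the relevant summands. The commutativity relations $\beta=gv$ and $v\alpha=fu$ then show that $h_1':=v^{-1}\iota_{(i,j)}$ is a lift of $g_{i,j}$ through $\beta$ (indeed $\beta h_1'=gvv^{-1}\iota_{(i,j)}=g\iota_{(i,j)}=g_{i,j}$) and that $h_2':=\pi_{(i,k)}v$ is an extension of $uf_{i,k}$ through $\alpha$ (indeed $h_2'\alpha=\pi_{(i,k)}v\alpha=\pi_{(i,k)}fu=f_{i,k}u$). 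By the definition of $\sigma_*$ the pairing equals the residue class of the composition $h_2'h_1'=\pi_{(i,k)}vv^{-1}\iota_{(i,j)}=\pi_{(i,k)}\iota_{(i,j)}$, which is $\delta_{jk}\cdot 1_{X_i}\in\kappa_{X_i}$. This precisely identifies $(\overline{uf_{i,j}})_j$ as the $\sigma_*$-dual of $(\overline{g_{i,j}})_j$.

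Part (b) is then formal. The mesh relation $\gamma_X$ is, in the summand corresponding to each arrow $X_i\to X$, built from any $\kappa_{X_i}$-basis of ${\rm irr}(X_i,X)$ together with its $\sigma_*$-dual basis of ${\rm irr}(\tau_AX,X_i)$, and the paper has already recalled that the resulting element does not depend on this choice. Substituting the basis and its dual provided by (a) yields the announced formula $\gamma_X=\sum_{i,j}\overline u\,\overline{f_{i,j}}\otimes\overline{g_{i,j}}$.

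The main obstacle is careful bookkeeping: one has to match the paper's composition order and its left/right $\kappa$-module conventions, and correctly identify the lifts $h_1'$ and $h_2'$ in terms of the vertical isomorphisms $u$ and $v$. Once this is done, the duality reduces to the tautological identity $\pi_{(i,k)}\iota_{(i,j)}=\delta_{jk}\cdot 1_{X_i}$.
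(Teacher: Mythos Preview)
Your proof is correct and follows essentially the same approach as the paper. Both arguments identify the required lift of $g_{i,j}$ through $\beta$ and extension of $uf_{i,k}$ through $\alpha$ as suitable slices of $v^{-1}$ and $v$, and then observe that their composite is the corresponding entry of $v^{-1}v=\mathrm{Id}$; you write this with inclusions/projections $\iota_{(i,j)},\pi_{(i,k)}$, while the paper writes out the components $w_{(i,j),(i',j')},v_{(i',j'),(i'',j'')}$ of $v^{-1}$ and $v$ and appeals to the matrix identity $wv=\mathrm{Id}$. The only cosmetic wrinkle is your switch between $uf_{i,k}$ (the paper's left-to-right composition) and $f_{i,k}u$ (your standard convention) in the same line, but both denote the same morphism $f_{i,k}\circ u$.
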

\begin{proof}
$(b)$ follows directly from $(a)$ and from the definition of
$\gamma_X$. It therefore suffices to prove $(a)$. The existence of $u$
and $v$ are direct consequences of the basic properties of almost
split sequences; also, for every $i$, the given $n_i$-tuples are indeed
bases because they arise from  a left (or right) minimal  almost split
morphism (see \ref{sec:remind-ausl-reit}). Let $w\colon \oplus_iX_i^{n_i}\to
\oplus_iX_i^{n_i}$ be $v^{-1}$. For every
$i,i'\in\{1,\ldots,r\}$, and every $1\leqslant j\leqslant n_i$, and every
$1\leqslant j'\leqslant n_{i'}$, let $v_{(i,j),(i',j')}\colon X_i\to
X_{i'}$ and $w_{(i,j),(i',j')}\colon X_i\to
X_{i'}$ be the respective components of $v$ and $w$, from the $j$-th
component $X_i$ of $X_i^{n_i}$ to the $j'$-th component $X_{i'}$ of
$X_{i'}^{n_{i'}}$. Then, the equality $wv=\mathrm{Id}$ reads
\begin{equation}
\label{eq_hg}
  \forall (i,j),(i'',j'')\ \ \
  \sum\limits_{(i',j')}w_{(i,j),(i',j')}v_{(i',j'),(i'',j'')}=\left\{
    \begin{array}{ll}
      \mathrm{Id}_{X_i} &\text{if $(i,j)=(i'',j'')$}\\
      0 & \text{otherwise.}
    \end{array}\right.
\end{equation}
Now, consider the equalities $uf=\alpha v$ in
$\hom_A(\tau_A X,\oplus_i X_i^{n_i})$ and $g=(wvg=)w\beta$ in
$\hom_A(\oplus_iX^{n_i},X)$. Given $i\in \{1,\ldots,r\}$ and $j\in
\{1,\ldots,n_i\}$, the composition with the $(i,j)$-th canonical section
$X_i\to X_i^{n_i}\to \oplus_{i'}X_{i'}^{n_{i'}}$ and with the $(i,j)$-th
canonical retraction $\oplus_{i'}X_{i'}^{n_{i'}}\to X_i^{n_i}\to X_i$,
respectively yield that the following diagrams are commutative,
\[
  \xymatrix@R=15pt{
\tau_A X \ar[rr]^\alpha \ar[d]_{uf_{i,j}} &&
\bigoplus\limits_{i'}X_{i'}^{n_{i'}} \ar[lld]^{
\left[
v_{(i',j'),(i,j)}\,;\,(i',j')
\right]^t}\\
X_i
}
\hskip 1cm \text{and}
\hskip 1cm
\xymatrix{
&& X_i \ar[lld]_{
\left[
w_{(i,j),(i',j')}\,;\,(i',j')
\right]
} \ar[d]^{g_{i,j}}\\
\bigoplus\limits_{i'}X_{i'}^{n_{i'}} \ar[rr]_{\beta} && X &.
}
\]
Thus
$\sigma_*\left(\overline{g_{i,j}}\otimes
    \overline{uf_{i'',j''}}\right) = \sum\limits_{i',j'}
  w_{(i,j),(i',j')}v_{(i',j'),(i'',j'')}$ for every $(i,j)$ and $(i'',j'')$.
This and (\ref{eq_hg}) show $(a)$.
\end{proof}

\subsection{Radical in mesh-categories}
\label{subsec_radical}

%
%
%
Let $\widetilde\Gamma$ be a modulated translation quiver.
Recall that the radical $\mathfrak R\k(\widetilde\Gamma)$ of
$\k(\widetilde \Gamma)$ was defined in the introduction.
For every arrow $x\to y$ in $\widetilde\Gamma$ the
natural  map $M(x,y)\to \k(\widetilde\Gamma)(x,y)$ is one-to-one. And
the $\kappa_x-\kappa_y$-bimodule
$\mathfrak R\k(\widetilde\Gamma)(x,y)$ decomposes as $M(x,y)\oplus \mathfrak
R^2\k(\widetilde\Gamma)(x,y)$.
The description of the ideal $\mathfrak R^\ell\k(\widetilde\Gamma)$ is easier when
$\widetilde\Gamma$ is with length as shows the following
proposition. It is central in this text and
used without further reference. The proof is a small variation of
\cite[2.1]{MR2586985} where the description was first proved in the
case $\kappa_x=\k$ for every vertex $x\in \widetilde\Gamma$.

\begin{prop}
  \label{prop:with_length}
  Let $\widetilde\Gamma$ be a translation quiver with length and $x,y\in\widetilde\Gamma$. If
  there is a path of length $\ell$ from $x$ to $y$ in $\widetilde\Gamma$, then:
  \begin{enumerate}[(a)]
  \item
    $\k(\widetilde\Gamma)(x,y) = \mathfrak{R}\k(\widetilde\Gamma)(x,y)
    = \mathfrak{R}^2\k(\widetilde\Gamma)(x,y) = \cdots = \mathfrak{R}^\ell\k(\widetilde\Gamma)(x,y)$. 
  \item $\mathfrak{R}^i\k(\widetilde\Gamma)(x,y)=0$ if $i>\ell$.
  \end{enumerate}
\end{prop}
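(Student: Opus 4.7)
The plan is to exploit the natural path-length grading on the path category $\k\widetilde\Gamma$. For any vertices $x,y$ the space $\k\widetilde\Gamma(x,y)$ decomposes as the direct sum, taken over directed paths $x=z_0\to z_1\to\cdots\to z_k=y$ in $\widetilde\Gamma$, of the tensor products $M(z_0,z_1)\otimes_{\kappa_{z_1}}\cdots\otimes_{\kappa_{z_{k-1}}}M(z_{k-1},z_k)$. Placing the length-$k$ summand in degree $k$ (with the degree-$0$ component being $\kappa_x$ when $x=y$, and zero otherwise) makes $\k\widetilde\Gamma$ a $\mathbb Z_{\geqslant 0}$-graded $\k$-category.

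I would then transfer this grading to $\k(\widetilde\Gamma)$. Each mesh element $\gamma_z\in \k\widetilde\Gamma(\tau z,z)$ is by definition a sum of two-fold compositions $u_i^{*}u_i$, hence is homogeneous of degree $2$; the mesh ideal, being generated by these homogeneous elements, is itself homogeneous, so $\k(\widetilde\Gamma)$ inherits the grading. Next I would identify $\mathfrak R^i\k(\widetilde\Gamma)(x,y)$ with the subspace of elements of degree $\geqslant i$: one inclusion holds because $\mathfrak R\k(\widetilde\Gamma)$ is generated by the degree-$1$ pieces $M(u,v)$, so each $i$-fold product has degree $\geqslant i$, and the reverse because every homogeneous element of degree $j\geqslant i$ is already an $i$-fold composition of degree-$1$ elements.

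The with-length hypothesis now concentrates everything: if a path of length $\ell$ exists from $x$ to $y$, then every path between them has length $\ell$, so both $\k\widetilde\Gamma(x,y)$ and $\k(\widetilde\Gamma)(x,y)$ are concentrated in the single degree $\ell$. Combined with the previous identification, this gives $(a)$ and $(b)$ at once: $\mathfrak R^i\k(\widetilde\Gamma)(x,y)$ equals $\k(\widetilde\Gamma)(x,y)$ for $0\leqslant i\leqslant \ell$ and vanishes for $i>\ell$. The only delicate step, and the sole place where the modulated setting differs from the treatment in \cite{MR2586985}, is verifying that the mesh ideal is homogeneous; but this is immediate from the formula $\gamma_z=\sum_i u_i^{*} u_i$, which displays $\gamma_z$ as a pure sum of degree-$2$ elements regardless of the underlying division algebras $\kappa_z$.
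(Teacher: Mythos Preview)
Your argument is correct and is precisely the approach the paper indicates: it does not spell out a proof but refers to \cite[2.1]{MR2586985} and calls the modulated case a ``small variation''. You have supplied that variation exactly---the path-length grading on $\k\widetilde\Gamma$, homogeneity of the mesh ideal via $\gamma_z=\sum_i u_i^*u_i$ being pure of degree~$2$, and the identification of $\mathfrak R^i$ with the degree-$\geqslant i$ part---so nothing further is needed.
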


\subsection{Coverings of translation quivers}
\label{subsec_cover}

See \cite[1.3]{MR643558} for more details.
A \emph{covering of
  translation quivers}  is a quiver morphism $p\colon \widetilde\Gamma\to \Gamma$ 
such that
\begin{enumerate}[(a)]
\item $\Gamma,\widetilde\Gamma$ are  translation quivers, $\Gamma$ is connected,
\item a vertex $x\in\widetilde\Gamma$ is projective (or injective,
  respectively) if and only if so is $px$,
\item $p$ commutes with the translations in $\Gamma$ and $\widetilde\Gamma$
  (where these are defined),
\item for every vertex $x\in\widetilde\Gamma$ the map $\alpha\mapsto
  p(\alpha)$ induces a bijection from the set of arrows in $\widetilde\Gamma$
  starting in $x$ (or ending in $x$) to the set of arrows in $\Gamma$
  starting in $px$ (or ending in $px$, respectively).
\end{enumerate}
Let $\pi\colon \widetilde\Gamma\to \Gamma$ be a covering of
translation quivers.  If $\Gamma$ is modulated by division
$\k$-algebras $\kappa_x$ and bimodules $M(x,y)$ for every vertex $x$
and every arrow $x\to y$, then $\widetilde\Gamma$ is modulated by the
division $\k$-algebra $\kappa_x:=\kappa_{\pi x}$ at the vertex
$x\in\widetilde\Gamma$ and by the bimodule $M(\pi x,\pi y)$ for every
arrow $x\to y$ in $\widetilde\Gamma$. In this text, this modulation on
$\widetilde\Gamma$ is called \emph{induced} by the modulation
of $\Gamma$.  When $\widetilde\Gamma$ is with length, it has a
\emph{length function,} that is, a map $x\mapsto \ell(x)$ defined on
vertices such that $\ell(y)=\ell(x)+1$ for every arrow $x\to y$. See
\cite[1.6]{MR643558} for the construction of such a function in the
particular case where $\widetilde\Gamma$ is the universal cover of
$\Gamma$ (\cite[1.2,1.3]{MR643558}). Note that quivers have no
parallel arrows here hence the notion of universal cover coincides
with that of \emph{generic cover} used in \cite{MR2819689}).
In the rest of the text, whenever $\Gamma$ is an Auslander-Reiten
component of $A$ and $\widetilde\Gamma$ is its universal cover, the
modulation of $\widetilde\Gamma$ induced by the standard modulation of
$\Gamma$ is referred to as the \emph{universal} modulation.

\medskip

From now on, $A$ is a finite-dimensional $\k$-algebra. Its
Auslander-Reiten components and  their 
coverings are modulated as in \ref{subsec_modulation} and
above, respectively.
To avoid possible confusions, upper-case letters
($X,Y,\ldots$) stand for vertices in Auslander-Reiten components  and lower-case letters ($x,y,\ldots$)
stand for vertices in other translation quivers. But the same notation
($\kappa,M$) is used for all $\k$-modulations.

\section{Well-behaved functors}
\label{sec:wb}

Recall the convention set in the introduction: whenever
$f \colon X \to Y$ and $g \colon Y \to Z$ are two mappings, their
composition is denoted by $fg \colon X \to Z$. Let $\Gamma$ be an
Auslander-Reiten component of $A$ and
$\pi\colon \widetilde\Gamma\to \Gamma$ be a covering of translation
quivers such that $\widetilde\Gamma$ is connected.  This section
introduces the notion of well-behaved functors
$F\colon \k(\widetilde\Gamma)\to \mathrm{ind}\,\Gamma$ where
$\widetilde \Gamma$ is equipped with the modulation induced from the
standard one restricted to $\Gamma$ (see
\ref{subsec_modulation}). These functors are proved to exist when
$\widetilde\Gamma$ is with length.  Such objects were first introduced
over algebraically closed fields for Auslander-Reiten quivers of
algebras of finite representation type (\cite[Sect. 1]{MR576602} and
\cite[Sect. 2]{MR643558}). The section also proves the lifting
properties of well-behaved functors
(see \cite[Sect. 2]{MR2819689}
when $\k$ is algebraically closed).

Until the end of \ref{subsec_wellbehavedexistence} the following
convention is used implicitely. If
$f\colon X\to \oplus_{i=1}^rX_i^{n_i}$ is  an irreducible morphism it is assumed that
$X\in \Gamma$, that $X_1,\ldots,X_r\in \Gamma$ are pairwise distinct,
and $n_1,\ldots,n_r\geqslant 1$. Then $f$ is written
$f=[f_{i,j}\,;\,i,j]$.

\subsection{Sections of residue fields and of spaces of irreducible
  morphisms}

Let $X\in\Gamma$ be a vertex.
The following theorem plays a central role in this article.
\begin{thm}[{Wedderburn, Malcev (see \cite[11.6 Corollary on p. 211]{MR674652})}]
  Let $\k$ be a perfect field. Let $E$ be a finite dimensional
  $\k$-algebra. Then, there exists a section $E/\rad E\to E$ of the
  $\k$-algebra canonical surjection $E\to E/\rad E$.
\end{thm}
For a given section $\kappa_X=\mathrm{End}_A(X)/\rad(X,X)\to \mathrm{End}_A(X)$ of the $\k$-algebra
surjection $\mathrm{End}_A(X)\to \mathrm{End}_A(X)/\rad(X,X)$, the image is denoted by $\k_X$. Then
$\k_X\subseteq \mathrm{End}_A(X)$ is a subalgebra such that $\mathrm{End}_A(X)=\k_X\oplus \rad(X,X)$ as a $\k$-vector space and the
surjection $\mathrm{End}_A(X)\twoheadrightarrow \kappa_X$ restricts to a
$\k$-algebra isomorphism $\k_X\xrightarrow{\sim} \kappa_X$. 
For
short, $\k_X$ is called a \emph{section of $\kappa_X$}. 

Let $X\to Y$ be an arrow in $\Gamma$. Then $X\not\simeq Y$ and $\mathrm{Hom}_A(X,Y)=\rad(X,Y)$. Suppose given sections $\k_X\subseteq
\mathrm{End}_A(X)$ and $\k_Y\subseteq \mathrm{End}_A(Y)$ of $\kappa_X$ and
$\kappa_Y$, respectively. Then $\mathrm{irr}(X,Y)$ is a
$\k_X-\k_Y$-bimodule using the isomorphisms
$\k_X\xrightarrow{\sim}\kappa_X$ and $\k_Y\xrightarrow{\sim}
\kappa_Y$. By a \emph{$\k_X-\k_Y$-linear section}
 (of $\mathrm{irr}(X,Y)$) is meant a section $\mathrm{irr}(X,Y) \to \rad(X,Y)$ of the canonical surjection $\rad(X,Y)\twoheadrightarrow \mathrm{irr}(X,Y)$ in the category of
 $\k_X-\k_Y$-bimodules. Such a section always exists because the
 $\k$-algebra $\k_X\otimes_{\k}\k_Y^{op}$ is
 semisimple. Note that the datum of a linear section 
 depends on the choice of the algebra sections $\k_X$ and $\k_Y$.

 \subsection{Well-behaved functors}\label{subsec_defwellbehaved}
 The following definition extends to the case of perfect fields the
 already known definition of well-behaved functors when the base field
 is algebraically closed (\cite[3.1]{MR643558} and
 \cite[2.2]{MR576602}).  Keep the setting established at the beginning of this
 section. A $\k$-linear functor
 $F\colon \k(\widetilde\Gamma)\to \mathrm{ind}\,\Gamma$ is called
 \emph{well-behaved} if
  \begin{enumerate}[(a)]
\item $Fx=\pi x$ for every vertex $x\in\widetilde\Gamma$,
  \item for every vertex $x\in \widetilde\Gamma$, the $\k$-algebra map
    from $\kappa_{\pi x}=\kappa_x$ to $\mathrm{End}_A(\pi x)$ defined by $u\mapsto F(u)$
is a section of the natural surjection $\mathrm{End}_A(\pi
x)\twoheadrightarrow \kappa_{\pi x}$. Its image is denoted by $\k_x$,
\item for every arrow $x\to y$ in $\widetilde \Gamma$, the following $\k$-linear
  composite map is a $\k_x-\k_y$-linear section in the obvious way
  \[
    \mathrm{irr}(\pi x,\pi y)= M(x,y)\xrightarrow{\id}
    \k(\widetilde \Gamma)(x,y)\xrightarrow{F}\mathrm{Hom}_A(\pi x,\pi y)\,.
    \]
  \end{enumerate}

Note that if $F$ is as in the definition then distinct vertices
$x,x'\in \widetilde\Gamma$ such that $\pi x=\pi x'$ may give rise to
different sections $\k_x$ and $\k_{x'}$ in $\mathrm{End}_A(\pi x)$.
The data of a section $\k_x\subseteq \mathrm{End}_A(\pi x)$ of
$\kappa_{\pi x}$, for every vertex $x\in\widetilde\Gamma$, and that of
a $\k_x-\k_y$-linear section $M(x,y)\to \mathrm{Hom}_A(\pi
x,\pi y)$, for every arrow $x\to y$ in $\widetilde \Gamma$ determine a
unique $\k$-linear functor $\k\widetilde\Gamma\to \mathrm{ind}\,\Gamma$. It induces a $\k$-linear functor $F\colon
\k(\widetilde \Gamma)\to \mathrm{ind}\,\Gamma$ if and only if it vanishes
on $\gamma_x=\gamma_{\pi x}$ for every non-projective vertex
$x\in\widetilde \Gamma$. In such a case, $F$ is
well-behaved. Moreover, any well-behaved functor arises in this way.

\subsection{Local sections on almost split sequences}
 \label{subsec_localsection}
The existence of well-behaved functors is based on the following technical lemma. It aims
at constructing sections that are compatible with the standard modulation of
$\Gamma$, in some sense.

\begin{lem}
  Let $X\in\Gamma$ be a non-injective vertex and let
  \[
        0\to X\xrightarrow{f}
    \bigoplus_{i=1}^rX_i^{n_i}\xrightarrow{
g} \tau_A^{-1}X\to 0
\]
be an almost split sequence. Let $\k_X\subseteq
\mathrm{End}_A(X)$ and $\k_{X_i}\subseteq \mathrm{End}_A(X_i)$ (for every
$i\in\{1,\ldots,r\})$ be sections of $\kappa_X$ and $\kappa_{X_i}$
respectively, and let $\mathrm{irr}(X,X_i)\hookrightarrow \rad(X,X_i)$ be a
$\k_X-\k_{X_i}$-linear section which maps
$\overline{f_{i,1}},\ldots,\overline{f_{i,n_i}}$to
$f_{i,1},\ldots,f_{i,n_i}$ respectively, for every $i\in\{1,\ldots,r\}$. There
  exists a section $\k_{\tau_A^{-1}X}\hookrightarrow \mathrm{End}_A(\tau_A^{-1}X)$ of $\kappa_{\tau_A^{-1}X}$ and a
  $\k_{X_i}-\k_{\tau_A^{-1}X}$-linear section $\mathrm{irr}(X_i,\tau_A^{-1}X)\to \rad(X_i,\tau_A^{-1}X)$ (for every
  $i\in\{1,\ldots,r\}$) such that
  \begin{enumerate}[(a)]
  \item it maps $\overline{g_{i,1}},\ldots,\overline{g_{i,n_i}}$ to
$g_{i,1},\ldots,g_{i,n_i}$ respectively, for every $i\in\{1,\ldots,r\}$,
\item the induced map
  $\oplus_{i=1}^r\mathrm{irr}(X,X_i)\otimes_{\kappa_{X_i}}
  \mathrm{irr}(X_i,\tau_A^{-1}X)\to \mathrm{Hom}_A(X,\tau_A^{-1}X)$
  vanishes on $\gamma_{\tau_A^{-1}X}$.
  \end{enumerate}
\end{lem}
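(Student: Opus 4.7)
My plan is to lift the section $\k_X$ through the almost split sequence to produce a compatible section $\k_{\tau_A^{-1}X}$; the required bimodule sections for ${\rm irr}(X_i,\tau_A^{-1}X)$ then follow by a short computation.

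First I build an auxiliary $\k$-algebra homomorphism $u\mapsto \tilde u$ from $\k_X$ into the subalgebra $\bigoplus_i M_{n_i}(\k_{X_i})\subseteq {\rm End}_A(\bigoplus_i X_i^{n_i})$, characterised by $f\tilde u=uf$ (in diagram order). Existence uses the hypothesis crucially: the given $\k_X-\k_{X_i}$-linear section realises a sub-bimodule $L_i\subseteq {\rm rad}(X,X_i)$ that projects isomorphically onto ${\rm irr}(X,X_i)$, and inside $L_i$ the morphisms $\{f_{i,j}\}_j$ form a right $\k_{X_i}$-basis. Expanding $\overline u\cdot\overline{f_{i,j'}}=\sum_j\overline{f_{i,j}}\,\overline{\nu_{j,j'}}$ in ${\rm irr}(X,X_i)$ determines unique entries $\nu_{j,j'}\in\k_{X_i}$, and since $L_i\hookrightarrow {\rm rad}(X,X_i)$ is injective, this identity lifts to the equality $uf_{i,j'}=\sum_j f_{i,j}\nu_{j,j'}$ defining $\tilde u$; uniqueness and multiplicativity follow from the same right $\k_{X_i}$-independence of the $f_{i,j}$'s.

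Next I exploit the mesh identity $fg=0$. Since $f\tilde u g=ufg=0$, the morphism $\tilde u g\colon \bigoplus_i X_i^{n_i}\to \tau_A^{-1}X$ vanishes on ${\rm im}\,f=\ker g$, so the cokernel property of $g$ provides a unique $\lambda\in {\rm End}_A(\tau_A^{-1}X)$ with $\tilde u g=g\lambda$. The assignment $\psi\colon u\mapsto\lambda$ is a $\k$-algebra homomorphism, and reading the relations $f\tilde u=uf$ and $\tilde u g=g\lambda$ as the defining data of the Auslander-Reiten translation on morphisms shows that $\overline\lambda=\tau_*^{-1}(\overline u)$ in $\kappa_{\tau_A^{-1}X}$. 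Hence $\psi$ is injective, and I set $\k_{\tau_A^{-1}X}:=\psi(\k_X)$. By construction every $\mu\in\k_{\tau_A^{-1}X}$ admits a lift $\tilde\mu\in\bigoplus_i M_{n_i}(\k_{X_i})$ with $g\mu=\tilde\mu g$.

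Finally I define $s_i\colon{\rm irr}(X_i,\tau_A^{-1}X)\to{\rm rad}(X_i,\tau_A^{-1}X)$ by $\overline{g_{i,j}}\mapsto g_{i,j}$, extended $\k_{X_i}-\k_{\tau_A^{-1}X}$-bilinearly. Well-definedness follows the same pattern as before: given a relation $\sum_j\lambda_j\overline{g_{i,j}}\mu_j=0$ in ${\rm irr}(X_i,\tau_A^{-1}X)$, the componentwise identity $g_{i,j}\mu_j=\sum_{j'}\tilde\mu_{j,(i,j'),(i,j)}g_{i,j'}$ rewrites $\sum_j\lambda_jg_{i,j}\mu_j$ as $\sum_{j'}\nu_{j'}g_{i,j'}$ with $\nu_{j'}\in\k_{X_i}$; passing to ${\rm irr}$ and invoking left $\kappa_{X_i}$-independence of the $\overline{g_{i,j'}}$'s forces $\nu_{j'}=0$ for all $j'$, whence $\sum_j\lambda_jg_{i,j}\mu_j=0$ in ${\rm rad}$. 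Condition (b) is then immediate: by the lemma of \ref{subsec_modulation}, $\gamma_{\tau_A^{-1}X}=\sum_{i,j}\overline{u_0f_{i,j}}\otimes\overline{g_{i,j}}$ for some isomorphism $u_0\colon X\to X$, and the induced map sends this to $\sum_{i,j}u_0'f_{i,j}g_{i,j}=u_0'\cdot fg=0$, where $u_0'\in\k_X$ lifts $\overline{u_0}$. The main obstacle is the construction of $\psi$: it is the step where the hypothesis on $f$, the mesh identity $fg=0$, and the cokernel property of $g$ must all be combined.
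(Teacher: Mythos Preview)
Your proof is correct and follows essentially the same strategy as the paper: build a matrix lift $\tilde u\in\bigoplus_i M_{n_i}(\k_{X_i})$ of each $u\in\k_X$ via the given bimodule section on the $f_{i,j}$'s, push it to ${\rm End}_A(\tau_A^{-1}X)$ using the cokernel property of $g$, and then read off the bimodule sections on the $g_{i,j}$'s. Your packaging is slightly more direct than the paper's (you build $\psi\colon\k_X\to{\rm End}_A(\tau_A^{-1}X)$ in one step, whereas the paper constructs an idempotent retraction $s$ of ${\rm End}_A(\tau_A^{-1}X)$ onto its image); one small caveat is that the identity $\overline\lambda=\tau_*^{-1}(\overline u)$ holds only up to an inner automorphism of $\kappa_X$ when $(f,g)$ differs from the fixed almost split sequence used to define $\tau_*$, but this does not affect the injectivity of $\psi$ or the section property of its image.
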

\begin{proof}
  Note that if such sections do exist then $(g_{i,1},\ldots,g_{i,n_i})$
  must be a basis over $\k_{X_i}$ of the image of $\mathrm{irr}(X_i,\tau_A^{-1}X)\to \rad(X_i,\tau_A^{-1}X)$ because
  $g$ is a right minimal almost split morphism. In particular, the
  section $\k_{\tau_A^{-1}X}\subseteq \mathrm{End}_A(\tau_A^{-1}X)$ must
  be such that the left $\k_{X_i}$-submodule of $\rad(X_i,\tau_A^{-1}X)$ generated by $g_{i,1},\ldots,g_{i,n_i}$ is
  a $\k_{X_i}-\k_{\tau_A^{-1}X}$-submodule of $\rad(X_i,\tau_A^{-1}X)$, for every $i\in\{1,\ldots,r\}$. The proof
  therefore proceeds as follows: 1) define a section
  $\k_{\tau_A^{-1}X}\subseteq\mathrm{End}_A(\tau_A^{-1}X)$ satisfying
  this last condition, 2) define sections $\mathrm{irr}(X_i,\tau_A^{-1}X)\to \rad(X_i,\tau_A^{-1}X)$ so that $(a)$
  is satisfied, and 3) prove $(b)$.

1) Let $\varphi\in \mathrm{End}_A(\tau_A^{-1}X)$ and define a new
representative $\varphi_1\in \mathrm{End}_A(\tau_A^{-1}X)$ of
$\overline\varphi\in\kappa_{\tau_A^{-1}X}$ as
follows. Let $0\to
X\xrightarrow{\alpha}\oplus_iX_i^{n_i}\xrightarrow{\beta}\tau_A^{-1}X\to
0$ be the almost split sequence used in the definition of the
standard modulation of $\Gamma$. So there exists an isomorphism of exact
sequences
\begin{equation}
\label{eq_isoexact}
    \xymatrix@R=15pt{
0 \ar[r] & X\ar[r]^\alpha \ar[d]_u &
\bigoplus_iX_i^{n_i}\ar[r]^\beta \ar[d]^v & \tau_A^{-1}X\ar[r] \ar@{=}[d] & 0\\
0 \ar[r] & X\ar[r]_f &
\bigoplus_iX_i^{n_i}\ar[r]_g & \tau_A^{-1}X\ar[r]  & 0 &.
}
\end{equation}
There also exists a
commutative diagram
\begin{equation}
\label{eq_phi}
   \xymatrix@R=15pt{
0 \ar[r] & X\ar[r]^f\ar[d]_\psi &
\bigoplus_iX_i^{n_i}\ar[r]^g \ar[d]^\theta & \tau_A^{-1}X\ar[r] \ar[d]^\varphi & 0\\
0 \ar[r] & X\ar[r]_f &
\bigoplus_iX_i^{n_i}\ar[r]_g & \tau_A^{-1}X\ar[r]  & 0 &
}
\end{equation}
for some morphisms $\theta$ and $\psi$. The pair $(\theta,\psi)$ in
the above diagram is not unique; however for any different pair
$(\theta',\psi')$ such that the diagram (\ref{eq_phi}) still commutes
after replacing $\theta$ and $\psi$ by $\theta'$ and $\psi'$,
respectively, then there exists $h\colon \oplus_i X^{n_i}\to X$ such
that $\theta-\theta' = hf$; given that
$(\psi-\psi')f = f(\theta - \theta')$ and that $f$ is a monomorphism,
it follows that $\psi-\psi'= fh$; note that none of the $X_i$ is isomorphic
to $X$ because $f$ is irreducible; therefore $h$ lies in $\rad$, and
hence $\psi-\psi'$ lies in $\rad^2$. The diagrams (\ref{eq_isoexact})
and (\ref{eq_phi}) yield the following commutative diagram
\[
      \xymatrix@R=15pt{
0 \ar[r] & X\ar[r]^\alpha \ar[d]_{u\psi u^{-1}} &
\bigoplus_iX_i^{n_i}\ar[r]^\beta \ar[d]^{v\theta v^{-1}}& \tau_A^{-1}X\ar[r] \ar[d]^\varphi & 0\\
0 \ar[r] & X\ar[r]_\alpha &
\bigoplus_iX_i^{n_i}\ar[r]_\beta & \tau_A^{-1}X\ar[r]  & 0 &.
}
\]
Therefore,
$\tau_*(\overline \varphi)=\overline{u\psi u^{-1}}\in\kappa_X$. The
previous discussion on the uniqueness of the pair $(\theta,\psi)$
shows that $\overline{\psi}$ is uniquely determined by $\varphi$ even
though $\psi$ is not. Now let $\psi_1\in \k_X$ be the representative
of $\psi$, that is, $\overline{\psi_1}=\overline{\psi}$.  Since the
section $\mathrm{irr}(X,X_i)\to \rad(X,X_i)$ is
$\k_X-\k_{X_i}$-linear and maps
$\overline{f_{i,1}},\ldots,\overline{f_{i,n_i}}$ to
$f_{i,1},\ldots,f_{i,n_i}$, respectively, and since
$(\overline{f_{i,1}},\ldots,\overline{f_{i,n_i}})$ is a basis of the
right $\k_{X_i}$-module $\mathrm{irr}(X,X_i)$, there is a unique
matrix $\eta_i\in M_{n_i}(\k_{X_i})$, considered as an endomorphism of
$X_i^{n_i}$, making the following square commute for
$i\in\{1,\ldots,r\}$
\[
  \xymatrix@R=15pt{
X \ar[rr]^{[f_{i,1},\ldots,f_{i,n_i}]} \ar[d]_{\psi_1} && X_i^{n_i} \ar[d]^{\eta_i} \\
X \ar[rr]_{[f_{i,1},\ldots,f_{i,n_i}]} && X_i^{n_i}& .
}
\]
Therefore there exists a unique
$\varphi_1\in \mathrm{End}_A(\tau_A^{-1}X)$ making the following diagram
commute where $\eta\colon \oplus_i X_i^{n_i}\to \oplus_i X_i^{n_i}$
is the morphism defined by $\{\eta_i\colon X_i^{n_i}\to X_i^{n_i}\}_{i}$.
\begin{equation}
\label{eq_phi1}
    \xymatrix@R=15pt{
0 \ar[r] & X\ar[r]^f \ar[d]_{\psi_1}&
\bigoplus_iX_i^{n_i}\ar[r]^g \ar[d]^\eta &
\tau_A^{-1}X\ar[r] \ar[d]^{\varphi_1} & 0\\
0 \ar[r] & X\ar[r]_f &
\bigoplus_iX_i^{n_i}\ar[r]_g & \tau_A^{-1}X\ar[r]  & 0 &.
} 
\end{equation}
Hence the following diagram commutes
\[
      \xymatrix@R=15pt{
0 \ar[r] & X\ar[r]^\alpha \ar[d]_{u\psi_1u^{-1}} &
\bigoplus_iX_i^{n_i}\ar[r]^\beta \ar[d]^{v \eta v^{-1}} & \tau_A^{-1}X\ar[r] \ar[d]^{\varphi_1} & 0\\
0 \ar[r] & X\ar[r]_\alpha &
\bigoplus_iX_i^{n_i}\ar[r]_\beta & \tau_A^{-1}X\ar[r]  & 0 &.
}
\]
This entails
$\overline{\varphi_1}=\tau_*^{-1}(\overline{u\psi_1u^{-1}})$. But
  $\overline{\psi_1}=\overline{\psi}$ so that
  $\overline{u\psi_1u^{-1}}=\overline{u\psi u^{-1}}$. Thus
  $\overline{\varphi_1}=\tau_*^{-1}(\overline{u\psi
    u^{-1}})=\overline{\varphi}$. This construction therefore yields a
  well-defined map
  \[
    \begin{array}{crcl}
    s\colon  &\mathrm{End}_A(\tau_A^{-1}X) & \to  & \mathrm{End}_A(\tau_A^{-1}X)\\
     & \varphi & \mapsto &\varphi_1\ \ \text{(such that
       $\overline{\varphi_1}=\overline \varphi$).}
    \end{array}
    \]
Since $\tau_*\colon \kappa_{\tau_A^{-1}X}\to \kappa_X$ is a
$\k$-algebra isomorphism and  $\eta_1,\ldots,\eta_r$ are
determined by $\psi_1$ (and the fixed $\k$-algebra sections),
the map $s$ is a $\k$-algebra homomorphism. Moreover if $\varphi\in
\rad(\tau_A^{-1}X,\tau_A^{-1}X)$ then $\overline{\varphi}=0$ and
the representative $\psi_1$ in $\k_X$ of $\tau_*(\overline\varphi)=0$
is $0$; therefore, $\eta_1,\ldots,\eta_r=0$ and $\varphi_1=0$; in
other words $s$ vanishes on $\rad(\tau_A^{-1}X,\tau_A^{-1}X)$. 
Hence, $s$ induces a $\k$-algebra homomorphism
\[
\overline s \colon \kappa_{\tau_A^{-1}X}\to \End_A(\tau_A^{-1}X)
\]
such that $\overline s(\overline \varphi) = \varphi$ for all
$\varphi \in \End_A(\tau_A^{-1}X)$. Denote by $\k_{\tau_A^{-1}X}$ the
image of $\overline s$. Thus, $\k_{\tau_A^{-1}X}
\subseteq \End_A(\tau_A^{-1}X)$ is a $\k$-algebra section of $\kappa_{\tau_A^{-1}X}$.

2) In order to prove $(a)$ there only remains to define a
$\k_{X_i}-\k_{\tau_A^{-1}X}$-linear section
$\mathrm{irr}(X_i,\tau_A^{-1}X)\to \rad(X_i,\tau_A^{-1}X)$
which maps $\overline{g_{i,1}},\ldots,\overline{g_{i,n_i}}$ to
$g_{i,1},\ldots,g_{i,n_i}$, respectively, for every
$i\in\{1,\ldots,r\}$.  Let $i\in\{1,\ldots,r\}$. Since $g$ is right
minimal almost split, then
$\{\overline{g_{i,j}}\}_{1\leqslant j \leqslant n_i}$ is a basis of
$\irr(X_i,\tau_A^{-1}X)$ as a left $\kappa_{X_i}$-vector space, and
hence as a left $\k_{X_i}$-vector space. Denote by $s_i$ the unique
 $\k_{X_i}$-linear mapping 
\[
s_i \colon \irr(X_i,\tau_A^{-1}X) \to \rad(X_i,\tau_A^{-1}X)
\]
which maps $\overline{g_{i,j}}$ to $g_{i,j}$ for every
$j\in \{1,\ldots,n_i\}$. In view of the assumption on the section
$\k_{X_i}\subseteq \End_A(X_i)$ of $\kappa_{X_i}$, the construction of
$s_i$ entails that the identity mapping of $\irr(X_i,\tau_A^{-1}X)$ is
equal to the composite morphism
\[
\irr(X_i,\tau_A^{-1}X) \xrightarrow{s_i} \rad(X_i,\tau_A^{-1}X)
\twoheadrightarrow \irr(X_i,\tau_A^{-1}X)\,,
\]
where the right hand-side arrow is the natural surjection. In order to
prove that $s_i$ is a $\k_{X_i}-\k_{\tau_A^{-1}X}$-linear section,
there only remains to prove that $s_i$ is
$\k_{\tau_A^{-1}X}$-linear. Let $\varphi \in \k_{\tau_A^{-1}X}$. Using
the above notation in the construction of $\k_{\tau_A^{-1}X}$, one has
$\varphi=\varphi_1$.  It follows from (\ref{eq_phi1}) that
\[
[g_{i,1},\ldots,g_{i,n_i}]^t\cdot\varphi=\eta_i\cdot
[g_{i,1},\ldots,g_{i,n_i1}]^t\,.
\]
Writing the endomorphism $\eta_i\colon X_i^{n_i} \to X_i^{n_i}$
coordinatewise as $[\eta_{i,j,j'}\,;\,1\leqslant j,j'\leqslant n_i]$,
where $\eta_{i,j,j'}\in \k_{X_i}$ for all $j,j'$, it follows that
\[
(\forall j\in \{1,\ldots,n_i\})\ g_{i,j}\varphi = \sum_{j'=1}^{n_i}
\eta_{i,j,j'} g_{i,j'}\,.
\]
Applying $s_i$ which is $\k_{X_i}$-linear and satisfies
$s_i(\overline{g_{i,j}}) = g_{i,j}$ for all $j$ entails that
\[
(\forall j\in \{1,\ldots,n_i\})\ 
s_i(\overline{g_{i,j}}\overline\varphi) =
g_{i,j}\varphi =
s_i(\overline{g_{i,j}}) \varphi\,.
\]
Accordingly, $s_i$ is $\k_{\tau_A^{-1}X}$-linear, which finishes the
proof of $(a)$.

3) There only remains to prove (b). According to the lemma in
\ref{subsec_modulation}, the commutative diagram (\ref{eq_isoexact})
entails that
$\gamma_{\tau_A^{-1}X}=\sum\limits_{i=1}^r\sum\limits_{j=1}^{n_i}\overline
u\overline{f_{i,j}}\otimes \overline{g_{i,j}}$.
Let $u'\in \k_{X}$ be the representative of
$\overline u\in \kappa_{X}$. The image of $\gamma_{\tau_A^{-1}X}$
under the map
$\oplus_{i=1}^r\mathrm{irr}(X,X_i)\otimes_{\kappa_{X_i}}
\mathrm{irr}(X_i,\tau_A^{-1}X)\to \mathrm{Hom}_A(X,\tau_A^{-1}X)$
induced by all the considered sections is therefore
$\sum\limits_{i,j}u'\,f_{i,j}\,g_{i,j}=u'\sum\limits_{i,j}f_{i,j}\,g_{i,j}=u'fg=0$.
\end{proof}

Dual considerations yield the following dual version of the
preceding lemma.
\begin{lem}
  Let $X\in\Gamma$ be a non-injective vertex and let
  \[
        0\to X\xrightarrow{f}
    \bigoplus_{i=1}^rX_i^{n_i}\xrightarrow{
g} \tau_A^{-1}X\to 0
\]
be an almost split sequence. Let
$\k_{\tau_A^{-1}X}\subseteq \mathrm{End}_A(X)$ and
$\k_{X_i}\subseteq \mathrm{End}_A(X_i)$ (for
$i\in\{1,\ldots,r\})$ be sections of $\kappa_{\tau_A^{-1}X}$ and
$\kappa_{X_i}$ respectively, and let
$\mathrm{irr}(X_i,\tau_A^{-1}X)\hookrightarrow
\rad(X_i,\tau_A^{-1}X)$
be a $\k_{X_i}-\k_{\tau_A^{-1}X}$-linear section under which
$\overline{g_{i,1}},\ldots,\overline{g_{i,n_i}}$ are mapped to
$g_{i,1},\ldots,g_{i,n_i}$ respectively, for every
$i\in\{1,\ldots,r\}$. There exists a section
$\k_X\hookrightarrow \mathrm{End}_A(X)$ of
$\kappa_{X}$ and a $\k_{X}-\k_{X_i}$-linear
section
$\mathrm{irr}(X,X_i)\to \rad(X,X_i)$
(for every $i\in\{1,\ldots,r\}$) such that
  \begin{enumerate}[(a)]
  \item it maps $\overline{f_{i,1}},\ldots,\overline{f_{i,n_i}}$ to
$f_{i,1},\ldots,f_{i,n_i}$ respectively, for every $i\in\{1,\ldots,r\}$,
\item the induced map
  $\oplus_{i=1}^r\mathrm{irr}(X,X_i)\otimes_{\kappa_{X_i}}
  \mathrm{irr}(X_i,\tau_A^{-1}X)\to \mathrm{Hom}_A(X,\tau_A^{-1}X)$
  vanishes on $\gamma_{\tau_A^{-1}X}$.
  \end{enumerate}
\end{lem}

\subsection{Inductive construction of well-behaved
  functors}\label{subsec_inductivewellbehaved}
In view of proving the existence of well-behaved functors
$\k(\widetilde{\Gamma})\to\mathrm{ind}\,\Gamma$ it is necessary to
consider $\k$-linear functors $F\colon \k\chi\to \mathrm{ind}\,\Gamma$
where $\chi$ is a full and convex subquiver of
$\widetilde\Gamma$. Here the notation $\k \chi$ stands for the full
subcategory of $\k\widetilde\Gamma$ with object set being the set of
vertices in $\chi$. Following \ref{subsec_defwellbehaved}, such a
functor $F\colon \k\chi\to\mathrm{ind}\,\Gamma$ is called well-behaved
if: (a) $Fx=\pi x$ for every vertex $x\in\chi$; (b) the $\k$-algebra
homomorphism
$\kappa_x\hookrightarrow \k\chi(x,x)\to \mathrm{End}_A(\pi x)$ is a
section of the natural surjection
$\mathrm{End}_A(\pi x)\twoheadrightarrow \kappa_{\pi x}=\kappa_x$, for
every vertex $x\in\chi$ (as above, the image of the section is denoted
by $\k_x$); (c) the $\k$-linear composite map
$\mathrm{irr}(\pi x,\pi y)= M(x,y)\hookrightarrow \k \chi(x,y)\to
\mathrm{Hom}_A(\pi x,\pi y)$
is a $\k_x-\k_y$-linear section, for every arrow $x\to y$ in $\chi$;
and (d) it vanishes on $\gamma_x=\gamma_{\pi x}$ for every
non-projective vertex $x\in\chi$ such that $\tau x\in \chi$.

\begin{lem}
  Assume that $\widetilde\Gamma$ is with length. Let $\ell$ be a
  length function on the vertices in $\widetilde\Gamma$.
  Let $F\colon \k\chi\to \mathrm{ind}\,\Gamma$ be a well-behaved functor
  where $\chi\subseteq \widetilde\Gamma$ is a full and convex
  subquiver distinct from $\widetilde\Gamma$ and satisfying the
  following two conditions
  \begin{enumerate}[(i)]
  \item either there is no arrow $x\to x_1$ in $\widetilde\Gamma$
    such that $x_1\in\chi$ and $x\not\in\chi$, or else there is an
    upper bound on the integers $\ell(x)$ where $x$ runs through the
    vertices in $\widetilde\Gamma\backslash\chi$ such that there
    exists an arrow $x\to x_1$ in $\widetilde\Gamma$ satisfying
    $x_1\in\chi$,
  \item either there is no arrow $x_1\to x$ in $\widetilde\Gamma$
    such that $x_1\in\chi$ and $x\not\in\chi$, or else there is a
    lower bound on the integers $\ell(x)$ where $x$ runs through the
    vertices in $\widetilde\Gamma\backslash\chi$ such that there
    exists an arrow $x_1\to x$ in $\widetilde\Gamma$ satisfying
    $x_1\in\chi$.
  \end{enumerate}
  Then there exists at least one arrow such as in $(i)$ or $(ii)$ and
  for every arrow $x\to x_1$ (or, $x_1\to x$) in $\widetilde\Gamma$
  such that $x\not\in \chi$ and $x_1\in\chi$, with $\ell(x)$ maximal
  (or, minimal, respectively) for these properties, there exists a
  well-behaved functor $F'\colon \k\chi'\to\mathrm{ind}\,\Gamma$ which
  extends $F$, where $\chi'$ denotes the full subquiver of
  $\widetilde\Gamma$ with vertices $x$ and the vertices in $\chi$.
\end{lem}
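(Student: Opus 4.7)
The plan is to strictly enlarge $\chi$ by adding a single vertex $x_0 \in \widetilde\Gamma\setminus\chi$, chosen extremally with respect to the length function $\ell$, and to define the extension $F'$ using (the lemma of) \ref{subsec_localsection}. First I would select $x_0$ as follows: if condition $(b)$ is non-vacuous for $\chi$, let $x_0$ minimise $\ell$ among vertices of $\widetilde\Gamma\setminus\chi$ having a predecessor in $\chi$; if $(b)$ is vacuous but $(a)$ is not, let $x_0$ maximise $\ell$ among vertices of $\widetilde\Gamma\setminus\chi$ having a successor in $\chi$; if both $(a)$ and $(b)$ are vacuous, take $x_0$ to be any vertex of $\widetilde\Gamma\setminus\chi$. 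Set $\chi'=\chi\cup\{x_0\}$. Since $\widetilde\Gamma$ is with length, all its paths are strictly monotone in $\ell$, so the extremality of $x_0$ together with the convexity of $\chi$ yields that $\chi'$ is still full and convex, and the length bounds in $(a)$ and $(b)$ persist for $\chi'$, shifted by at most one.

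Next I would isolate the mesh relations that $F'$ must newly respect: they are the $\gamma_z$ with $z\in\chi'$ non-projective, $\tau z\in\chi'$, and at least one of $z,\tau z$ equal to $x_0$. These split into two symmetric types, namely (I) $z=x_0$ with $\tau x_0\in\chi$, and (II) $z\in\chi$ with $\tau z=x_0$. If neither type occurs---e.g.\ if $x_0$ is both projective and injective, or its Auslander--Reiten translates do not appear in $\chi'$---then the extension is free: I pick any $\k$-algebra section $\k_{x_0}\hookrightarrow {\rm End}_A(\pi x_0)$ of $\kappa_{x_0}$ via Wedderburn--Malcev, and, for each new arrow joining $x_0$ with some $y\in\chi$, any $\k_{x_0}$-$\k_y$-linear (or $\k_y$-$\k_{x_0}$-linear) section of ${\rm irr}(\pi x_0,\pi y)$ (or ${\rm irr}(\pi y,\pi x_0)$), whose existence is guaranteed by the semisimplicity of the relevant $\k$-tensor product of division algebras.

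In case (I), the minimality of $\ell(x_0)$ forces every middle vertex $y$ of the mesh ending at $x_0$---every $y$ with arrows $\tau x_0\to y\to x_0$ in $\widetilde\Gamma$---to lie in $\chi$: otherwise $y\notin\chi$ would itself be a predecessor-candidate with $\ell(y)=\ell(x_0)-1<\ell(x_0)$, contradicting minimality. The hypotheses of the lemma of \ref{subsec_localsection} are therefore met, and applying it to the almost split sequence ending at $\pi x_0$ yields $\k_{x_0}$, the linear sections on the new arrows $y\to x_0$, and the identity $F'(\gamma_{x_0})=0$. Case (II) is handled symmetrically: the maximality (or, as above, the vacuity of $(b)$ combined with the translation structure) places every middle vertex of the mesh at $z$ inside $\chi$, and a dual invocation of the same lemma---running from the sink of an almost split sequence back to its source---furnishes $\k_{x_0}$ and the linear sections on the new arrows $x_0\to y$ so that $F'(\gamma_z)=0$. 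The main obstacle is to check that when both types (I) and (II) arise simultaneously at $x_0$, the two invocations deliver a single coherent choice of $\k_{x_0}$ and of the linear sections; this would be handled by running one invocation to fix $\k_{x_0}$ and the adjacent sections, then verifying, using once more the with-length property of $\widetilde\Gamma$ and the compatibility recipe in the lemma of \ref{subsec_modulation}, that the second mesh relation is then automatic.
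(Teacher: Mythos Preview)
Your approach is the paper's: enlarge $\chi$ by a single extremal vertex and invoke the local-section lemma of \ref{subsec_localsection} for the new mesh relation. The paper picks a vertex with a successor in $\chi$ and maximal $\ell$; you prioritise the dual choice, which is immaterial.

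The only flaw is your final paragraph. The scenario ``both types (I) and (II) arise simultaneously at $x_0$'' never occurs, so no coherence check is needed. Indeed, suppose $x_0$ has a predecessor $x_1\in\chi$ (your first case). If also $\tau^{-1}x_0\in\chi$, then $x_1\to x_0\to y\to \tau^{-1}x_0$ is a path in $\widetilde\Gamma$ between two vertices of $\chi$ passing through $x_0\notin\chi$, contradicting convexity of $\chi$; so type (II) is impossible. Dually, if $x_0$ has a successor in $\chi$ then $\tau x_0\notin\chi$ and type (I) is impossible. Once you record this observation, exactly one new mesh (or none) appears, and a single application of the lemma---as stated for your type (I), or its evident dual for type (II)---finishes the argument cleanly. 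Your proposed resolution of the non-existent overlap (``running one invocation to fix $\k_{x_0}$\ldots then verifying\ldots that the second mesh relation is then automatic'') would in fact be problematic if the case arose, since the lemma offers no such automatic compatibility; fortunately it never does.
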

\begin{proof}
  There exists an arrow $x\to x_1$ or $x_1\to x$ in $\widetilde\Gamma$
  such that $x\not\in\chi$ and $x_1\in\chi$ because
  $\chi\subsetneq\widetilde\Gamma$. Assume the former (the latter is
  dealt with similarly) and choose $x$ so that $\ell(x)$ is
  maximal ($(i)$). Let $\chi'$ be the full
  subquiver of $\widetilde\Gamma$ with vertices $x$ and those of
  $\chi$. Then $\chi'$ is convex in $\widetilde\Gamma$ because
  $\ell(x)$ is maximal. Let $x\to x_1,\ldots,x\to x_r$ be the arrows
  in $\widetilde\Gamma$ starting in $x$ and ending in some vertex in
  $\chi$. Note that if $x$ is non-injective and $\tau^{-1}x\in\chi$
  then these are all the arrows in $\widetilde\Gamma$ starting in $x$
  because  $\ell(x)$ is maximal. In order to extend $F\colon
  \k\chi\to \mathrm{ind}\,\Gamma$ to a
  functor $F'\colon
  \k\chi'\to \mathrm{ind}\,\Gamma$ distinguish two cases according to
  whether $x$ is non-injective and $\tau^{-1}x\in\chi$, or not. For
  every $i\in\{1,\ldots,r\}$ let $\k_i\subseteq \mathrm{End}_A(\pi x_i)$
  be the section $F(\kappa_{x_i})$ of
  $\kappa_{\pi x_i}$.

  Assume first that either $x$ is injective, or else $x$ is
  non-injective and $\tau^{-1}x\not\in \chi$.  Fix any $\k$-algebra
  section $\k_x\subseteq \mathrm{End}_A(\pi x)$ of $\kappa_{\pi x}$. Let
  $i\in\{1,\ldots,r\}$. The $\k$-algebra isomorphisms
  $\kappa_{x_i}\to \k_i$ (induced by $F$) and $\kappa_x\to \k_x$ allow
  one to consider $M(x,x_i)$ as a $\k_x-\k_i$-bimodule. For this
  structure, the quotient map
  $\mathrm{Hom}_A(\pi x,\pi x_i)=\mathrm{rad}(\pi x,\pi
  x_i)\overset{p_i}{\twoheadrightarrow} \mathrm{irr}(\pi x,\pi x_i)$
  is $\k_x-\k_i$-linear. On the other hand the $\k$-algebra
  $\k_x\otimes_{\k}\k_i^{op}$ is semi-simple.  Hence there
  exists a $\k_x-\k_i$-linear section
  $M(x,x_i)=\mathrm{irr}(\pi x,\pi x_i)\xrightarrow{s_i}\mathrm{Hom}_A(\pi
  x,\pi x_i)$
  of $p_i$. The sections $\k_x\subseteq \mathrm{End}_A(\pi x)$ and $s_i$
  ($i\in\{1,\ldots,r\}$) extend $F\colon \k\chi\to \mathrm{ind}\,\Gamma$
  to a $\k$-linear functor $F'\colon \k\chi'\to \mathrm{ind}\,\Gamma$
  satisfying the conditions (a), (b) and (c) in the definition of
  well-behaved functors. Moreover, condition (d) is satisfied for $F'$
  because it is so for $F$ and because, if $y\in\widetilde\Gamma$ is
  non-projective and $y,\tau y\in \chi'$ then $y,\tau y\in\chi$. This
  proves the lemma when either $x$ is injective or else $x$ is
  non-injective and $\tau^{-1}x\not\in \chi$.

Assume now that $x$ is non-injective and $\tau^{-1}x\in\chi$.  The
mesh in $\widetilde\Gamma$ starting in $x$ has the form
\[
  \xymatrix@R=2pt{
& x_1\ar[rd] \ar@{.}[dd] & \\
x \ar[ru] \ar[rd] &  & \tau^{-1}x\,.\\
&x_r\ar[ru]
}
\]
For simplicity let $X=\pi x$ and $X_i=\pi x_i$ for every
$i\in\{1,\ldots,r\}$ so that $\pi \tau^{-1}x=\tau_A^{-1}X$. Let
$n_1,\ldots,n_r\geqslant 1$ be the integers such that there is an
almost split sequence
$0\to X\to \oplus_{i=1}^rX_i^{n_i}\to \tau_A^{-1}X\to 0$. Since
$F\colon \k \chi \to \mathrm{ind}\,\Gamma$ is well-behaved, it induces a
$\k$-algebra section
$\k_{\tau_A^{-1}X}\subseteq \mathrm{End}_A(\tau_A^{-1}X)$ of
$\kappa_{\tau^{-1}x}=\kappa_{\tau_A^{-1}X}$. It also induces a
$\k_i-\k_{\tau_A^{-1}X}$-linear section
$M(x_i,\tau^{-1}x)=\mathrm{irr}(X_i,\tau_A^{-1}X)\to \mathrm{rad}(X_i,\tau_A^{-1}X)$,
for every $i\in\{1,\ldots,r\}$. Let $i\in\{1,\ldots,r\}$, let
$\{g_{i,j}\colon X_i\to \tau_A^{-1}X\}_{1\leqslant j\leqslant n_i}$ be the image
under this $\k_i-\k_{\tau_A^{-1}X}$-linear section of a basis of the
$\k_i$-vector space $M(x_i,\tau^{-1}x)$. Thus, the morphism
$\bigoplus\limits_{i=1}^rX_i^{n_i} \xrightarrow{g:=\left[g_{i,j}\,;\,
    i,j\right]^t} \tau_A^{-1}X$
is right minimal almost split. Let
$f=\left[f_{i,j}\,;\, i,j\right]\colon X\to \oplus_{i=1}^rX_i^{n_i}$
be its kernel. Then \ref{subsec_localsection} yields a $\k$-algebra
section $\k_X\subseteq \mathrm{End}_A(\tau_A^{-1}X)$ of
$\kappa_X$ together with $\k_X-\k_i$-linear
sections
$M(x,x_i) =\mathrm{irr}(X,X_i)\to \mathrm{rad}(X,X_i)$,
for $1\leqslant i \leqslant r$, such that the induced map
\[
  \bigoplus\limits_{i=1}^r
  M(x,x_i)\otimes_{\kappa_{x_i}} 
  M(x_i,\tau^{-1}x)\longrightarrow \mathrm{Hom}_A(X,\tau_A^{-1}X)
\]
vanishes on $\gamma_{\tau_A^{-1}X}$. These new sections clearly extend
$F\colon \k\chi \to \mathrm{ind}\,\Gamma$ to a well-behaved functor
$F'\colon \k\chi'\to \mathrm{ind}\,\Gamma$. Like in the previous case
$F'\colon \k\chi'\to \mathrm{ind}\,\Gamma$ is
well-behaved. Finally $\chi'$ satisfies both conditions $(i)$ and $(ii)$
in the statement of the lemma since $\chi'\backslash \chi$ consists of
one vertex. 
\end{proof}

\subsection{Existence of well-behaved functors}
\label{subsec_wellbehavedexistence}
For a strongly irreducible morphism $f\colon X \to
\oplus_{i=1}^rX_i^{n_i}$, the $\kappa_X-\kappa_{X_i}$-bimodule
$\kappa_X \cdot \overline{f_{i,j}} \cdot \kappa_{X_i}$ is projective
and given by an idempotent of $\kappa_X\otimes \kappa_{X_i}^\op$, for
all $i,j$, hence, the following are equivalent
\begin{itemize}
\item for all $i,j$, the annihilator of $\overline{f_{i,j}}$ over
  $\kappa_{X}\otimes_{\k} \kappa_{X_i}^{\op}$ is trivial,
\item for all $i$, the family $\{\overline{f_{i,j}}\}_{j}$ of
  $\irr(X,X_i)$ is free over
  $\kappa_{X} \otimes_{\k} \kappa_{X_i}^\op$.
\end{itemize}
The following result implies Theorem~\ref{thm1}.  
\begin{prop}
  Let $A$ be a finite-dimensional algebra over a perfect field
  $\k$. Let $\Gamma$ be an Auslander-Reiten component of $A$. Let
  $\pi\colon\widetilde\Gamma\to \Gamma$ be a covering of translation
  quivers where $\widetilde\Gamma$ is connected and with length. Endow
  $\Gamma$ with its standard $\k$-modulation and $\widetilde \Gamma$
  with the induced $\k$-modulation.
  \begin{enumerate}
  \item There exists a well-behaved functor $F\colon \k(\widetilde
    \Gamma)\to \mathrm{ind}\,\Gamma$.
  \item Let $X\in \Gamma$ and let
    $f\colon X\to \oplus_{i=1}^rX_i^{n_i}$ be a strongly irreducible
    morphism. Assume that the annihilator of $\overline{f_{i,j}}$ over
    $\kappa_X\otimes_\k\kappa_{X_i}^\op$ is zero for all $i,j$. Let
    $x\in\pi^{-1}(X)$ and let
    \[
    \xymatrix@R=2pt{
      & x_1 \ar@{.}[dd]\\
      x \ar[ru] \ar[rd]\\
      &x_r
    }
    \]
be the full  subquiver of $\widetilde\Gamma$ such that $\pi
x_i=X_i$. Then there exists a well-behaved functor $F\colon
\k(\widetilde\Gamma)\to \mathrm{ind}\,\Gamma$ such that $F$ maps
$\overline{f_{i,j}}\in \k(\widetilde\Gamma)(x,x_i)$ to $f_{i,j}$ for
every 
$i\in\{1,\ldots,r\}$ and $j\in\{1,\ldots,n_i\}$.
  \end{enumerate}
\end{prop}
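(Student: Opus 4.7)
Both parts are proved simultaneously by Zorn's lemma applied to the poset $\mathcal{P}$ whose elements are pairs $(\chi, F)$, where $\chi\subseteq\widetilde\Gamma$ is a full and convex subquiver satisfying the boundary conditions (a) and (b) of the extension lemma of \ref{subsec_inductivewellbehaved}, and $F\colon\k\chi\to{\rm ind}\,\Gamma$ is a well-behaved functor, ordered by extension. Every chain in $\mathcal{P}$ has an upper bound obtained by taking unions of subquivers and of the functors they carry: fullness and convexity pass to unions directly, and the boundary bounds on $\ell(x)$ pass to unions thanks to the local finiteness of $\widetilde\Gamma$. Well-behavedness is a local property at each vertex and arrow, so it also passes to unions. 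Any maximal element $(\chi_\infty, F_\infty)$ must satisfy $\chi_\infty=\widetilde\Gamma$, for otherwise \ref{subsec_inductivewellbehaved} would produce a strict extension, contradicting maximality.

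For part (1) it suffices to exhibit a nonempty $\mathcal{P}$. Pick any vertex $x_0\in\widetilde\Gamma$ and set $\chi_0=\{x_0\}$; by local finiteness only finitely many vertices are incident to $x_0$, so conditions (a) and (b) hold trivially. Choose any $\k$-algebra section $\k_{x_0}\subseteq{\rm End}_A(\pi x_0)$ of $\kappa_{x_0}$, which exists by Wedderburn-Malcev since $\k$ is perfect. The resulting functor on $\k\chi_0$ is vacuously well-behaved, and Zorn's lemma delivers the desired $F$.

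For part (2) the initial element of $\mathcal{P}$ must already encode the prescribed images. Take $\chi_0=\{x,x_1,\ldots,x_r\}$: by the length hypothesis any path between two of these vertices must coincide with one of the arrows $x\to x_i$, so $\chi_0$ is convex, and its only arrows are these, ensuring both boundary conditions hold. Pick arbitrary Wedderburn-Malcev sections $\k_x\subseteq{\rm End}_A(X)$ and $\k_{x_i}\subseteq{\rm End}_A(X_i)$. By strong irreducibility, $(\overline{f_{i,j}})_{j=1}^{n_i}$ is free over $\kappa_X\otimes_\k\kappa_{X_i}^{op}$, hence over the subalgebra $\k_x\otimes_\k\k_{x_i}^{op}$; since $\k$ is perfect this latter algebra is semisimple, so the $\k_x$-$\k_{x_i}$-subbimodule $V\subseteq{\rm irr}(X,X_i)$ generated by $(\overline{f_{i,j}})_j$ admits a bimodule complement $W$. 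One then defines a $\k_x$-$\k_{x_i}$-linear section $s_i\colon{\rm irr}(X,X_i)\to{\rm rad}(X,X_i)$ by the rule $s_i(\overline{f_{i,j}})=f_{i,j}$ on $V$ (well-defined by freeness) together with any bimodule section of the quotient map restricted to $W$ (which exists by the same semisimplicity argument). The data $(\k_x,\k_{x_i},s_i)$ determine a well-behaved $F_0\colon\k\chi_0\to{\rm ind}\,\Gamma$ with $F_0(\overline{f_{i,j}})=f_{i,j}$, and Zorn's lemma extends $F_0$ to the required global functor.

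The main technical point is the verification that conditions (a) and (b) of \ref{subsec_inductivewellbehaved} propagate to the unions that arise in the Zorn argument, which reduces to combinatorial bookkeeping using the length function and local finiteness of $\widetilde\Gamma$. The perfect-field hypothesis plays a dual role: it provides the Wedderburn-Malcev sections of the residue fields $\kappa_x$, and it makes the tensor products $\k_x\otimes_\k\k_{x_i}^{op}$ semisimple so that $(\overline{f_{i,j}})_j$ can be extended to a bimodule basis and its images prescribed freely.
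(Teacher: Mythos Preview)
Your approach mirrors the paper's: define a poset of partial well-behaved functors on full convex subquivers satisfying conditions (a), (b), exhibit a suitable initial element (a single vertex for part (1), the star $\{x,x_1,\ldots,x_r\}$ with the prescribed sections for part (2)), take a maximal element, and conclude $\chi=\widetilde\Gamma$ by the extension lemma of \ref{subsec_inductivewellbehaved}. The paper simply writes ``let $(\chi,F)$ be a maximal element of $\Sigma$'' without further comment, whereas you explicitly invoke Zorn's lemma and attempt to verify the chain condition.

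That verification contains an error: your claim that the boundary bounds (a), (b) ``pass to unions thanks to the local finiteness of $\widetilde\Gamma$'' is false. Take $\widetilde\Gamma=\mathbb Z A_\infty$ with vertices $(i,j)$, $i\in\mathbb Z$, $j\geqslant 1$, the usual arrows, and length $\ell(i,j)=2i+j$. The subquivers $\chi_n=\{(0,1),\ldots,(0,n)\}$ form a chain in $\mathcal P$: each $\chi_n$ is convex (the only paths between its vertices run straight up the column), and being finite it satisfies (a), (b) trivially; no mesh lies entirely in $\chi_n$, so a compatible well-behaved $F_n$ exists by an arbitrary choice of sections. But the union $\chi=\{(0,j):j\geqslant 1\}$ has incoming boundary sources $(-1,j)$ for every $j\geqslant 2$, with $\ell(-1,j)=j-2$ unbounded, so condition (a) fails for $\chi$. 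Local finiteness bounds the number of neighbours of a fixed vertex; it says nothing about the $\ell$-values along the boundary of an infinite subquiver. Thus the union need not lie in $\mathcal P$, and no other upper bound is available short of extending the functor to all of $\widetilde\Gamma$, which is what you are trying to prove. Your Zorn argument therefore does not close as written; the paper's proof is silent at exactly the same point, so your attempt is not worse than the paper's, but the added justification is incorrect rather than merely missing.
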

\begin{proof}
  It is possible to assume that $A$ is connected and
  not semi-simple. Both (1) and (2) are proved with the same inductive
  construction yet with a specific initialisation step for each. In
  case (1), let $r=1$, let $X\to X_1$ be any arrow in $\Gamma$ and let
  $x\to x_1$ be any arrow in $\widetilde \Gamma$ with image under
  $\pi$ equal to $X\to X_1$. In both cases, let $\chi_0$ be the full
  subquiver of $\widetilde\Gamma$ with vertices $x,x_1,\ldots,x_r$.
  
  Let $\Sigma$ be the set of pairs
  $(\chi,F\colon \k\chi\to \mathrm{ind}\,\Gamma)$ where
  $\chi\subseteq\widetilde \Gamma$ is a full and convex subquiver
  containing $\chi_0$ and which satisfies conditions $(i)$ and $(ii)$
  in \ref{subsec_inductivewellbehaved}, and
  $F\colon \k\chi\to \mathrm{ind}\,\Gamma$ is a well-behaved functor
  with the following additional condition in case (2): It maps
  $\overline{f_{i,j}}\in\k(\widetilde\Gamma)(x,x_i)$ to $f_{i,j}$ for
  all $i,j$. This set is ordered: $(\chi,F)\leqslant (\chi',F')$ if and only
  if $\chi\subseteq \chi'$ and $F'$ restricts to $F$.
  
  Let $\ell$ be a length function on $\widetilde\Gamma$. Denote
  $\ell(x)$ by $\ell_0$. Denote by $\widetilde\Gamma_{\geqslant
    \ell_0}$ the full subquiver of $\widetilde\Gamma$ with set of
  vertices being $\{x'\in \widetilde\Gamma\ |\ \ell(x')\geqslant
  \ell_0\}$. Consider the subsets $\Sigma_1$, $\Sigma_2$ and
  $\Sigma_3$ of $\Sigma$ defined as follows.
  \begin{itemize}
  \item $\Sigma_1$ consists of those $(\chi,F)\in \Sigma$ such that
    the restriction of $\ell$ to $\chi$ is bounded below by $\ell_0$
    and bounded above by $\ell_0+1$.
  \item $\Sigma_2$ consists of those $(\chi,F)\in \Sigma$ such that
    $\chi \subseteq \widetilde\Gamma_{\geqslant \ell_0}$ and $\chi$ is
    stable under predecessors in $\widetilde\Gamma_{\geqslant
      \ell_0}$, that is, any oriented path in
    $\widetilde\Gamma_{\geqslant \ell_0}$ with endpoint in $\chi$ is
    contained in $\chi$.
  \item $\Sigma_3$ consists of the those $(\chi,F)\in \Sigma$ such
    that $\widetilde\Gamma_{\geqslant \ell_0}\subseteq \chi$ and
    $\chi$ is stable under successors in $\widetilde\Gamma$.
  \end{itemize}
  
  The set $\Sigma_1$ is not empty. Indeed, let
  $F\colon \k \chi_0\to \ind\Gamma$ be as follows. In case (1),
  let $\k_X$ and $\k_{X_1}$ be any sections of $\kappa_X$ and
  $\kappa_{X_1}$ in $\End_A(X)$ and $\End_A(X_1)$, respectively,
  let $\irr(X,X_1)\to \rad(X,X_1)$ be any section of the
  $\k_X-\k_{X_1}$-linear canonical surjection
  $\rad(X,X_1)\to \irr(X,X_1)$ and let
  $F\colon \k \chi_0 \to \ind\,\widetilde\Gamma$ be given by the
  sections $\k_X$, $\k_{X_1}$ and $\irr(X,X_1)\to \rad(X,X_1)$. In
  this case, $(\chi_0,F)$ lies in $\Sigma_1$.  In case (2), fix
  $\k$-algebra sections $\k_X\subseteq \mathrm{End}_A(X)$ of
  $\kappa_X$, and $\k_{X_i}\subseteq \mathrm{End}_A(X_i)$ of
  $\kappa_{X_i}$, for every
  $i\in\{1,\ldots,r\}$. Consider an index $i\in \{1,\ldots,r\}$. By
  assumption on $f$, there exists a direct sum decomposition
  $\irr(X,X_i) = (\oplus_j \k_X\cdot \overline{f_{i,j}}\cdot \k_{X_i})
  \oplus M_i$
  as $\k_X\otimes_\k\k_{X_i}^\op$-modules. Let
  $\mathrm{irr}(X,X_i)\to \rad(X,X_i)$ be the following
  $\k_X\otimes_\k\k_{X_i}^\op$-linear section of the natural surjection
  $\rad(X,X_i) \twoheadrightarrow \irr(X,X_i)$ (recall that the
  algebra $\k_X\otimes_\k \k_{X_i}^\op$ is semi-simple),
  \begin{itemize}
  \item for all $j$, the composite morphism
    $\k_X\cdot\overline{f_{i,j}}\cdot \k_{X_i}\hookrightarrow
    \irr(X,X_i) \to \rad(X,X_i)$
    is the  $\k_X\otimes_\k\k_{X_i}^\op$-linear section of the composite morphism
    $\rad(X,X_i) \twoheadrightarrow \irr(X,X_i) \twoheadrightarrow
    \k_X\cdot \overline{f_{i,j}}\cdot \k_{X_i}$
    which maps $\overline{f_{i,j}}$ onto $f_{i,j}$,
    recall that the annihilator of $\overline{f_{i,j}}$ over
    $\k_X\otimes_\k\k_{X_i}^\op$ is trivial,
  \item the composite morphism $M_i \hookrightarrow \irr(X,X_i) \to
    \rad(X,X_i)$ is any $\k_X\otimes_\k\k_{X_i}^\op$-linear section of
    the composite morphism $\rad(X,X_i) \twoheadrightarrow \irr(X,X_i)
    \twoheadrightarrow M_i$.
  \end{itemize}
  These sections define a well-behaved functor
  $F\colon \k\chi_0\to \mathrm{ind}\,\Gamma$. Then
  $(\chi_0,F)\in\Sigma_1$.

  Therefore, by definition, $\Sigma_1$ is totally inductive. Hence it
  has at least one maximal element, say, $(\chi_1,F_1)$, by the
  Kuratowski-Zorn Lemma. Apply \ref{subsec_inductivewellbehaved} to
  it: since $(\chi_1,F_1)$ is maximal in $\Sigma_1$, it follows that
  $\chi_1$ is stable under predecessors in
  $\widetilde\Gamma_{\geqslant \ell_0}$.  Thus, $\Sigma_2$ contains
  $(\chi_1,F_1)$, and hence it is not empty.

  Since $\Sigma_2$ is not empty, then it is totally inductive by
  construction. Consider a maximal element $(\chi_2,F_2)$ in
  $\Sigma_2$. In this case, $\chi_2$ equals
  $\widetilde\Gamma_{\geqslant \ell_0}$. Indeed, apply
  \ref{subsec_inductivewellbehaved} to it: since $(\chi_2,F_2)$ is
  maximal in $\Sigma_2$, it follows that there is no arrow in
  $\widetilde\Gamma$ connecting a vertex in $\chi_2$ to a vertex in
  $\widetilde\Gamma_{\geqslant \ell_0} \backslash \chi_2$. Hence,
  should there exist
  $x'\in \widetilde\Gamma_{\geqslant \ell_0}\backslash\chi_2$, then
  $\chi_2\cup\{x'\}$ would be a full subquiver of $\widetilde\Gamma$
  and any $\k$-algebra section of $\kappa_{\pi x'}$ in
  $\End_A(\pi x')$ would define an extension of $F_2$ to a well-behaved
  functor $\chi_2\cup\{x'\}\to \ind A$, a contradiction to
  $(\chi_2,F_2)$ being maximal in $\Sigma_2$. Thus,
  $\chi_2=\widetilde\Gamma_{\geqslant \ell_0}$. In particular,
  $\Sigma_3$ is not empty.

  Therefore, by definition, $\Sigma_3$ is totally inductive. Consider
  a maximal element $(\chi_3,F_3)$ in $\Sigma_3$. By absurd, assume
  that the inclusion $\chi_3\subseteq \widetilde\Gamma$ is
  strict. Then, there exists $x'\in \widetilde\Gamma\backslash\chi_3$
  with $\ell(x')$ maximal because
  $\widetilde\Gamma_{\geqslant \ell_0}\subseteq \chi_3$. There is no
  arrow $x_1\to x'$ such that $x_1\in \chi_3$ because $\chi_3$ is
  stable under successors. Should there exist no arrow $x'\to x_1$ in
  $\widetilde\Gamma$ such that $x_1\in \chi_3$, then
  $\chi_3\cup\{x'\}$ would be a full subquiver of $\widetilde \Gamma$
  containing $\widetilde\Gamma_{\geqslant \ell_0}$ and stable under
  successors, and any $\k$-algebra section of $\kappa_{\pi x'}$ in
  $\End_A(\pi x')$ would define an extension of $F_3$ to a well-behaved
  functor $\chi_3\cup\{x'\}\to \ind A$, a contradiction to
  $(\chi_3,F_3)$ being maximal in $\Sigma_3$. There hence exists an
  arrow $x'\to x_1$ such that $x_1\in \chi_3$. Now, apply
  \ref{subsec_inductivewellbehaved} to $(\chi_3,F_3)$: by assumption
  on $\ell(x')$, there exists a well-behaved functor
  $F'_3\colon \chi_3'\to \ind A$ which extends $F_3$, where $\chi_3'$
  is the full subquiver of $\widetilde\Gamma$ with vertices $x'$ and
  those of $\chi_3$. The assumption on $\ell(x')$ entails that
  $(\chi_3',F_3')\in \Sigma_3$, a contradiction to $(\chi_3,F_3)$
  being maximal in $\Sigma_3$. Thus, $\chi_3=\widetilde\Gamma$.
\end{proof}

The authors acknowledge the referee for pointing out the missing
hypothesis on the annihilators in a previous version of this text. It
would be interesting to determine when such annihilators are indeed
trivial. For instance, this is the case when $\kappa_X=\k$ because,
then, $\kappa_X\otimes_\k\kappa_{X_i}^\op$ is a division algebra for
all $i$.

\subsection{Covering property of well-behaved functors}
\label{subsec_wellbehavedcovering}

Theorem~\ref{thm2}  is an adaptation of \cite[Thm. B]{MR2819689} to
 perfect fields. 
\begin{proof}[Proof of Theorem~\ref{thm2}]
  The proof uses a specific left minimal almost split morphism and a specific
  almost split sequence that arise from $F$ and which are now 
  introduced.  Let $X=\pi x$. For every arrow in $\widetilde\Gamma$
  with source $x$ (say, with target $x'$), fix one basis over
  $\kappa_{\pi x'}$ of $\mathrm{irr}(\pi x,\pi x')$. Putting 
  these bases together (for all the arrows in $\widetilde\Gamma$
  with source $x$)  yields a sequence of morphisms in
  $\k(\widetilde\Gamma)$ with domain $x$. Say, the sequence is
  $(\alpha_i)_{i=1,\ldots,r}$ where the codomain of $\alpha_i$ is
  denoted by $x_i$ (there may be repetitions in the sequence of
  codomains). Set $X_i=\pi x_i$ and $a_i=F(\alpha_i)$ for every
  $i$. In particular $a_i\colon X\to X_i$ is an irreducible morphism
  and $\overline{a_i}=\alpha_i$ if $\overline{a_i}$ is considered as
  lying in $\k(\widetilde\Gamma)(x,x_i)$. By construction,
  $[a_1,\ldots,a_r]\colon X\to \oplus_{i=1}^r X_i$ is a left minimal
  almost split morphism. If $x$ is non-injective then $a$ completes
into an almost split sequence
$0\to X\xrightarrow{a} \oplus_{i=1}^rX_i\xrightarrow{b}
  \tau_A^{-1}X\to 0$
as follows. For every $X'\in \{X_1,\ldots,X_r\}$ the family
$\{\overline{a_i}\}_{i\,s.t.\,X'=X_i}$ is a basis of $\mathrm{irr}(X,X')$
over $\kappa_{X'}$; let $\{\beta_i\}_{i\,s.t.\,X'=X_i}$ be the corresponding
dual basis of
$\mathrm{irr}(X',\tau_A^{-1}X)$ over $\kappa_{X'}$ (for the standard $\k$-modulation
of $\Gamma$);
For every $i\in\{1,\ldots,r\}$ such that $X'=X_i$  set $b_i\colon
X_i\to \tau_A^{-1}X$ to be the image of $\beta_i\in
\k(\widetilde\Gamma)(x_i,\tau^{-1}x)$ under $F$
(hence, if one considers $\overline{b_i}$ as lying in
$\k(\widetilde\Gamma)(x_i,\tau^{-1}x)$ then  $\overline{b_i}=\beta_i$). By construction
$\gamma_{\tau^{-1}x}= \sum_{i=1}^r\overline{a_i}\otimes
\overline{b_i}$. Therefore $\sum_{i=1}^ra_ib_i=0$ because $F$ is
well-behaved and maps each $\overline{a_i}$ and $\overline{b_i}$ to
$a_i$ and $b_i$,
respectively. Since moreover $a$ is left minimal  almost split, the
morphism $b=[b_1,\ldots,b_r]^t$ is
right minimal almost split. Thus $(a,b)$ forms the announced almost
split sequence.

\medskip

(a) The two maps are dual to each other so only the first one is taken
care of.

The surjectivity for every $x$ is proved by induction on $n\geqslant
0$. If $n=0$ it follows from: $\rad^0(Fx,Fy)/\rad(Fx,Fy)$ is $\kappa_{Fx}$ or $0$ according to
whether $Fx=Fy$ or $Fx\neq Fy$, and $\mathcal
R^0\k(\widetilde\Gamma)(x,z)/\mathcal R\k(\widetilde\Gamma)(x,z)$ is $\kappa_x$ or $0$ according to
whether $x=z$ or $x\neq z$. If $n\geqslant 1$, if the surjectivity
is already proved for indices smaller that $n$, and if $f\in \rad^n(Fx,Fy)$ is given, then there exists $(u_i)_i\in \oplus_{i=1}^r
\rad^{n-1}(Fx_i,Fy)$ such that $f=\sum_ia_iu_i$ (\ref{subsec_factorisation}); for every $i$, 
there exists $(\beta_{i,z})_z\in \oplus_{Fz=Fy}\mathcal
R^{n-1}\k(\widetilde\Gamma)(x_i,z)$ such that
$u_i=\sum_zF(\beta_{i,z})\,\mathrm{mod}\,\rad^n$ (induction hypothesis); thus
$f-\sum_zF\left( \sum_i \alpha_i\beta_{i,z}\right)\in \rad^{n+1}$. This proves the surjectivity at index $n$.

The injectivity for every $x$ is also proved by induction on
$n\geqslant 0$. If $n=0$ it follows from:
$\k(\widetilde\Gamma)(x,z)=\mathcal R\k(\widetilde\Gamma)(x,z)$ if
$x\neq z$, and $\k(\widetilde\Gamma)(x,x)=\kappa_x=\kappa_{Fx}$, and
$F$ induces a section $\kappa_x\to \mathrm{End}_A(Fx)$ of the
canonical surjection $\mathrm{End}_A(Fx)\to \kappa_{Fx}$. Let
$n\geqslant 1$. Assume the injectivity for indices smaller that
$n$. Let
$(\phi_z)\in \oplus_{Fz=Fy}\mathfrak R^n\k(\widetilde\Gamma)(x,z)$ be
such that $\sum_z F(\phi_z)\in \rad^{n+1}(Fx,Fy)$. Using the
surjectivity and \ref{subsec_factorisation} yields
$(\psi_z)_z\in \oplus_{Fz=Fy}\mathfrak
R^{n+1}\k(\widetilde\Gamma)(x,z)$
together with $(u_i)_i\in \oplus_{i=1}^r\rad^{n+1}(Fx_i,Fy)$ such that
$\sum_zF(\phi_z-\psi_z)=\sum_i a_iu_i$. On the other hand,
$n\geqslant 1$ and $\{\alpha_j\}_{j\in \{1,\ldots,r\}}$ contains a
basis of $M(x,x_i)$, which is part of the induced modulation of
$\widetilde\Gamma$, over $\kappa_{x_i}$ for every
$i\in\{1,\ldots,r\}$.  The construction of $\k(\widetilde\Gamma)$
therefore yields
$(\theta_{i,z})_i \in \oplus_{i=1}^r\k(\widetilde\Gamma)(x_i,z)$ such
that $\phi_z-\psi_z=\sum_i\alpha_i\theta_{i,z}$, for every
$z$. Putting these morphisms together and using that $a_i=F(\alpha_i)$
for every $i$ yields
$\sum_ia_i\left(\sum_zF(\theta_{z,i})-u_i\right)=0$. Now distinguish
two cases according to whether $x$ is injective or not. If $x$ is
injective then $\sum_zF(\theta_{i,z})=u_i$ which, following the
induction hypothesis, implies that
$\theta_{i,z}\in \mathfrak R^n\k(\widetilde\Gamma)$ for every $i$ and
every $z$. Thus
$\phi_z=\psi_z+\sum_i\alpha_i\theta_{i,z}\in \mathfrak
R^{n+1}\k(\widetilde\Gamma)$
for every $z$. If $x$ is not injective there exists
$v\in \mathrm{Hom}_A(\tau_A^{-1} Fx,Fy)$ such that
$\sum_zF(\theta_{i,z})-u_i=b_iv$ for every $i$.  Using the
surjectivity yields
$(\chi_z)_z\in \oplus_{Fz=Fy}\k(\widetilde\Gamma)(\tau^{-1}x, z)$ such
that $v=\sum_zF(\chi_z)\,\mathrm{mod}\,\rad^{n-1}$. In particular
$b_iv=\sum_zF(\beta_i\chi_z)\,\mathrm{mod}\,\rad^n$ for every
$i$. Hence
$\sum_zF(\theta_{i,z}-\beta_i\chi_z) = u_i\,\mathrm{mod}\,\rad^n$.
Therefore
$\theta_{i,z}-\beta_i\chi_z\in \mathfrak R^n\k(\widetilde\Gamma)$ for
every $i$ and every $z$ (by induction and because $u_i\in\rad^{n+1}$).
Since moreover $\sum_i\alpha_i\beta_i=0$,
$\psi_z\in\mathfrak R^{n+1}\k(\widetilde\Gamma)$ and
$\phi_z=\psi_z+\sum_i\alpha_i\theta_{i,z}$ for every $z$, it follows
that $\phi_z\in\mathfrak R^{n+1}\k(\widetilde\Gamma)$.

\medskip

(b) follows from (a) and from \ref{subsec_radical} (part (b)).

\medskip

(c) follows from (a), (b) and the fact that $\Gamma$ is generalised
standard, that is, $\bigcap_{n\geqslant 0}\rad^n(X,Y)=0$ for every
$X,Y\in \Gamma$.
\end{proof}

\section{Application to compositions of irreducible morphisms}
\label{sec:applications1}

Recall the convention set in the introduction: whenever
$f \colon X \to Y$ and $g \colon Y \to Z$ are two mappings, their
composition is denoted by $fg \colon X \to Z$. The following
equivalence was proved in \cite[Prop. 5.1]{MR2819689} when $\k$ is
algebraically closed and under the additional assumption that the
valuation of the involved arrows are trivial. This last assumption is
dropped here.
\begin{prop}
  Let $X_1,\ldots,X_{n+1}\in \mathrm{ind}\,A$. The following conditions
  are equivalent
  \begin{enumerate}[(a)]
  \item there exist irreducible morphisms
    $X_1\xrightarrow{h_1}\cdots \xrightarrow{h_n}X_{n+1}$ such that $h_1\cdots h_n\in \rad^{n+1}\backslash\{0\}$,
  \item there exist irreducible morphisms $f_i\colon X_i\to X_{i+1}$
    and morphisms $\varepsilon_i\colon X_i\to X_{i+1}$, for
    every $i$, such
    that $f_1\cdots f_n=0$, such that
    $\varepsilon_1\cdots\varepsilon_n\neq 0$ and such that, for every $i$,  either $\varepsilon_i\in \rad^2$ or else $\varepsilon_i=f_i$.
  \end{enumerate}
\end{prop}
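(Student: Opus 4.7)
The plan is to lift the composition problem into the mesh category $\k(\widetilde\Gamma)$ of a suitable covering, where composition is governed by mesh relations; this reduces the non-trivial direction to the injectivity statement of the well-behaved functor. Let $\Gamma$ be the Auslander-Reiten component containing $X_1,\ldots,X_{n+1}$ (they share a common component because the irreducible morphisms in (a), resp. (b), connect them). Fix a covering $\pi:\widetilde\Gamma\to\Gamma$ with $\widetilde\Gamma$ with length and, by Theorem~\ref{thm1}, choose a well-behaved functor $F:\k(\widetilde\Gamma)\to{\rm ind}\,\Gamma$.

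For $(a)\Rightarrow(b)$, fix $\tilde x_1\in\pi^{-1}(X_1)$ and lift the arrows $X_i\to X_{i+1}$ inductively to arrows $\tilde x_i\to\tilde x_{i+1}$ in $\widetilde\Gamma$; since $\pi$ restricts to a bijection on arrows out of each $\tilde x_i$, the direct sum in Theorem~\ref{thm2}(a) at level $1$ reduces to the single bimodule $M(\tilde x_i,\tilde x_{i+1})$, so each $h_i$ lifts to $\tilde h_i\in M(\tilde x_i,\tilde x_{i+1})$ with $f_i:=F(\tilde h_i)$ irreducible and $h_i-f_i\in{\rm rad}^2$. \emph{The key step} is to prove $f_1\cdots f_n=0$: from $h_i-f_i\in{\rm rad}^2$ together with $h_1\cdots h_n\in{\rm rad}^{n+1}$ one first gets $f_1\cdots f_n=F(\tilde h_1\cdots\tilde h_n)\in{\rm rad}^{n+1}$; the injectivity of the graded map in Theorem~\ref{thm2}(a) applied at level $n$ then forces $\tilde h_1\cdots\tilde h_n\in\mathfrak R^{n+1}\k(\widetilde\Gamma)(\tilde x_1,\tilde x_{n+1})$, and this space vanishes by Proposition~\ref{prop:with_length}(b) because $\widetilde\Gamma$ is with length and there is a path of length $n$ from $\tilde x_1$ to $\tilde x_{n+1}$. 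Hence $\tilde h_1\cdots\tilde h_n=0$ and $f_1\cdots f_n=0$. Expanding $h_1\cdots h_n=\sum_{T\subseteq\{1,\ldots,n\}}\prod_{i\in T}(h_i-f_i)\prod_{i\notin T}f_i$, the term $T=\emptyset$ vanishes while the full sum is non-zero, so some non-empty $T_0$ produces a non-zero summand; setting $\varepsilon_i:=h_i-f_i$ for $i\in T_0$ and $\varepsilon_i:=f_i$ otherwise yields the data of (b).

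For $(b)\Rightarrow(a)$ the argument is purely combinatorial. Let $I=\{i : \varepsilon_i\neq f_i\}$, which is non-empty because otherwise $\varepsilon_1\cdots\varepsilon_n=f_1\cdots f_n=0$. For $T\subseteq I$ define $P(T):=\prod_{i\in T}\varepsilon_i\prod_{i\notin T}f_i$, so $P(\emptyset)=0$ and $P(I)=\varepsilon_1\cdots\varepsilon_n\neq 0$. Choose $S\subseteq I$ of minimum cardinality with $P(S)\neq 0$; then $S\neq\emptyset$, and $P(T)=0$ for every $T\subsetneq S$ by minimality. Defining $h_i:=f_i+\varepsilon_i$ for $i\in S$ and $h_i:=f_i$ otherwise, each $h_i$ is irreducible (being congruent to $f_i$ modulo ${\rm rad}^2$), and expanding gives $h_1\cdots h_n=\sum_{T\subseteq S}P(T)=P(S)\neq 0$, which lies in ${\rm rad}^{n+|S|}\subseteq{\rm rad}^{n+1}$.

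\emph{Main obstacle.} The non-trivial content is establishing $f_1\cdots f_n=0$ in $(a)\Rightarrow(b)$, that is, the \emph{exact} vanishing of the composition of the chosen lifts in ${\rm ind}\,\Gamma$, not merely its membership in ${\rm rad}^{n+1}$. This is precisely where Theorem~\ref{thm2}(a) and the with-length hypothesis on $\widetilde\Gamma$ become indispensable; once this vanishing is secured, everything else reduces to a routine expansion and a minimal-witness selection.
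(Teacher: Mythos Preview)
Your argument for $(a)\Rightarrow(b)$ is essentially identical to the paper's: lift the path to a covering with length, use the well-behaved functor to obtain $f_i:=F(\tilde h_i)$ with $h_i-f_i\in{\rm rad}^2$, deduce $F(\tilde h_1\cdots\tilde h_n)\in{\rm rad}^{n+1}$, apply the injectivity in Theorem~\ref{thm2}(a) together with Proposition~\ref{prop:with_length}(b) to get $\tilde h_1\cdots\tilde h_n=0$, hence $f_1\cdots f_n=0$, and then expand $h_1\cdots h_n$ to extract a non-zero monomial. The paper phrases the final step as picking a non-vanishing summand of $F(\overline{h_1}\cdots\overline{h_n})-h_1\cdots h_n$, but this is the same expansion you write.

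For $(b)\Rightarrow(a)$ you genuinely diverge from the paper, which simply cites \cite[Thm.~2.7]{MR2578589}. Your minimal-witness argument (choose $S\subseteq I$ of least size with $P(S)\neq 0$, set $h_i=f_i+\varepsilon_i$ on $S$ and $h_i=f_i$ elsewhere, then $h_1\cdots h_n=\sum_{T\subseteq S}P(T)=P(S)\in{\rm rad}^{n+|S|}\setminus\{0\}$) is correct, elementary, and self-contained. It buys you independence from the external reference and even yields the slightly sharper conclusion $h_1\cdots h_n\in{\rm rad}^{n+|S|}$; the paper's citation, by contrast, keeps the exposition shorter and defers the combinatorics to established literature.
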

\begin{proof}
  The implication $(b)\Rightarrow (a)$ was proved in
  \cite[Thm. 2.7]{MR2578589} (the proof there works for artin algebras and the standard hypothesis
  made there plays no role for this implication).
Assume $(a)$. Let $\Gamma$ be the component of $\Gamma(\mathrm{mod}\,A)$
containing $X_1,\ldots,X_{n+1}$, let $\pi\colon \tilde\Gamma\to\Gamma$ be
the universal covering and $F\colon \k(\widetilde\Gamma)\to \mathrm{ind}\,\Gamma$ be a well-behaved functor
(\ref{subsec_wellbehavedexistence}). Let $x_1\in \pi^{-1}(X_1)$. 
There is a unique path $\gamma\colon x_1\to x_2\to \cdots\to x_{n+1}$ in
$\widetilde\Gamma$ which image under $\pi$ is $X_1\to X_2\to
\cdots\to X_{n+1}$. Let $h_i\colon X_i\to X_{i+1}$ ($1\leqslant
i\leqslant n$) be irreducible morphisms such that $h_1\cdots h_n\in
\rad^{n+1}\backslash\{0\}$ and consider $\overline{h_i}\in \mathrm{irr}(X_i,X_{i+1})$ as lying in
$\k(\widetilde\Gamma)(x_i,x_{i+1})$. Let $h_i'=h_i-F(\overline{h_i})$
for $1\leqslant i\leqslant n$. Then $h_i'\in \rad^2$ because $F$
is well-behaved. Therefore
$F(\overline{h_1}\cdots\overline{h_n})\in\rad^{n+1}$. Since
$\mathfrak R^{n+1}\k(\widetilde\Gamma)(x_1,x_{n+1})=0$  (the path
$\gamma$ has length $n$, \ref{subsec_radical}), it follows that
$\overline{h_1}\cdots\overline{h_n}=0$
(\ref{subsec_wellbehavedcovering}). This and  $h_1\cdots
h_n\neq 0$ imply that $F(\overline{h_1}\cdots \overline{h_n})-h_1\cdots
h_n\neq 0$ that is,
the sum of the morphisms
\[
F(\overline{h_1})\cdots
  F(\overline{h_{i_1-1}}) h_{i_1}'F(\overline{h_{i_1+1}})\cdots
  F(\overline{h_{i_t-1}})h_{i_t}'F(\overline{h_{i_t+1}})\cdots
  F(\overline{h_n})\,,
  \]
for $t\in\{1,\ldots,n\}$ and $1\leqslant i_1<\cdots <i_t\leqslant n$,
is non-zero.
  Hence there exists $t\in \{1,\ldots,n\}$ and $1\leqslant
i_1<\cdots<i_t\leqslant n$ such that the corresponding term in the
above sum is non-zero. Define $f_j:=F(\overline{h_j})$, and
$\varepsilon_j:=F(\overline{h_j})$ if $j\not\in \{i_1,\ldots,i_t\}$ or
$\varepsilon_j:=h_j'$ if $j\in \{i_1,\ldots,i_t\}$. Then
$\{f_i,\varepsilon_i\}_{i=1,\ldots,n}$ fits the
  requirements of $(b)$.
\end{proof}

\section{Acknowledgements}

Part of the work presented in this text was done while the second
named author was visiting Universit\'e de Sherbrooke (Qu\'ebec,
Canada). He thanks Ibrahim Assem and the Department of Mathematics
there for the warm hospitality. The authors thank the referee for
several suggestions improving the quality of the presentation and the
proofs.

\bibliographystyle{plain}
\bibliography{biblio-CLT12}
\end{document}